\documentclass[11pt]{article}

\oddsidemargin 8mm \textwidth 148mm \textheight 215mm \topskip 0pt
\topmargin -2mm

\usepackage{amsfonts, amssymb}
\usepackage{amsmath}
\usepackage{color}
\usepackage{indentfirst}
\usepackage{verbatim}   
\usepackage{mathrsfs}   
\usepackage{yhmath}     

\begin{document}
\renewcommand{\theenumi}{(\roman{enumi})}
\newenvironment{proof}{\trivlist \item[\hskip \labelsep{\it Proof.}]}{
 \endtrivlist}
\newtheorem{corollary}{Corollary}[section]
\newtheorem{proposition}{Proposition}[section]
\newtheorem{theorem}{Theorem}[section]
\newtheorem{lemma}{Lemma}[section]
\newtheorem{remark}{Remark}[section]
\newtheorem{defi}{Definition}[section]
\newcounter{example}[section]
\newenvironment{example}{\refstepcounter{example} \trivlist
\item[\hskip \labelsep{\bf Example \theexample}]}{ \endtrivlist}
\newenvironment{claim}{\trivlist \item[\hskip \labelsep{\bf
Claim.} \it]}{ \endtrivlist}

\title  {Representations of the Renner Monoid
        }
\date{}
\maketitle

\vspace{ -1.6cm}
\begin{center}
\author Zhuo Li, \quad Zhenheng Li, \quad You'an Cao
\vskip 8mm
\end{center}

\def\J {{\cal J}}
\def\F {{\cal F}}
\def\a {\alpha}
\def\b {\beta}
\def\s {\sigma}
\def\v {\vec}
\def\js {(\cal J, \sigma)}
\def\ua {\underline a}
\def\ut {\underline t}
\def\e {\varepsilon}

\def\n { {\bf n} }
\def\r { {\bf r} }
\def\ek {\eta_{K}}
\def\eJ {\eta_{J}}
\newcommand{\be }{ \begin {}\\ \end{} }

\begin{abstract}
We describe irreducible representations and character formulas of the Renner monoids for reductive monoids, which generalizes the Munn-Solomon representation theory of rook monoids to any Renner monoids. The type map and polytope associated with reductive monoids play a crucial role in our work. It turns out that the irreducible representations of certain parabolic subgroups of the Weyl groups determine the complete set of irreducible representations of the Renner monoids. An analogue of the Munn-Solomon formula for calculating the character of the Renner monoids, in terms of the characters of the parabolic subgroups, is shown.

\vspace{ 0.3cm}
\noindent {\bf 2000 Mathematics Subject Classification:}
20M30, 20M25, 20M18, 20G99

\vspace{ 0.3cm}
\noindent {\bf Keywords:}
Renner monoid, Representation, Character, Type map, Weyl group, Polytope, M\"{o}bius function.

\end{abstract}

\baselineskip 18pt
\clubpenalty=1000
\widowpenalty=1000

\section{Introduction}
The Renner monoid generalizes the Weyl group from algebraic groups to monoids. A traditional example of the Renner monoid is the rook monoid $R_n$ consisting of 0's and 1's with each row and column having at most one 1. Another example is the so called symplectic Renner monoid introduced in \cite{LR}.

Munn \cite {M0, M1, M2} found the representation theory and character formula for $R_n$. Solomon \cite{LS2} reformulated the Munn theory for $R_n$ and determined the irreducible representations of $R_n$ in terms of the irreducible representations of certain symmetric groups by introducing some central idempotents in the monoid algebra $FR_n$ where $F$ is a field of characteristic 0. Using admissible sets and Solomon's approach, Li, Li, and Cao \cite{ZHEN} described the irreducible representations of symplectic Renner monoids by making use of the irreducible representations of symplectic Weyl groups and some symmetric groups.

Steinberg \cite{S1, S2} generalized Solomon's approach to finite inverse semigroups $S$ and proved that there is an algebra isomorphism between the groupiod algebra of $S$ and the monoid algebra of $S$. Using the M\"obius function on $S$ and adding up all the Solomon's idempotents in each $\mathscr{D}$-class of $S$, he found the decomposition of the monoid algebra into a direct sum of matrix algebras over group rings and obtained the character formula for multiplicities. His formula is versatile in that he gave applications to decomposing tensor powers and exterior products of rook matrix representations.

Putcha \cite{PU3} studied the representation theory of arbitrary finite monoids and determined all the irreducible characters of full transformation semigroups. He \cite{PU5} investigated highest weight categories and determined the blocks of the complex algebra of the full transformation semigroups. He found in \cite{PU2, PU4} the explicit isomorphism between the monoid algebra of the Renner monoid and the monoid Hecke algebra introduced by Solomon \cite{LS1}. Putcha and Renner \cite {PR2} studied the complex representation theory of a class of finite monoids $M$ and found its relationship with Harish-Chandra's theory of cuspidal representations of the unit group $G$ of $M$. Renner \cite {R2} showed that if $M$ is a finite monoid of Lie type with zero then the restriction of any irreducible representation of $M$ to $G$ is still irreducible. Furthermore, in \cite{R3} he computed the number of irreducible modular representations of $M$.

In this paper we characterize the irreducible representations and characters of the Renner monoid $R$ for reductive monoids $M$ with 0. Associated with $M$ are the type map and a polytope whose face lattice is isomorphic to the lattice of idempotents of $R$ \cite{P, P1, R4}. They are used in Theorem 3.1 to determine the decomposition of the monoid algebra of $R$ into a direct sum of matrix algebras. More specifically, let $\Lambda$ be a cross section lattice of $M$ and $W^*(e)$ and $W(e)$ parabolic subgroups of $W$ (for definitions, see Section 2.1 below). For any $e\in \Lambda$, denote by $B_e$ the group algebra of $W^*(e)$ over $F$. Let $d_e = |W|/|W(e)|$ and $M_{d_e}(B_e)$ the matrix algebra on $B_e$. Then the monoid algebra is a direct sum of all $M_{d_e}(B_e)$, where $e$ runs through $\Lambda$. Certain central idempotents in the monoid algebra, constructed from the polytope, are key. These results are used to determine all the inequivalent irreducible representations of $R$ in Theorem 3.2 in terms of the irreducible representations of $W^*(e)$ with $e\in \Lambda$.

We then in Theorem 4.1 give an analogue of the Munn-Solomon formula for calculating the character of the Renner monoid in terms of the characters of the parabolic groups $W^*(e)$ where $e \in \Lambda$. A couple of examples to show concrete irreducible representations are given. An example to demonstrate the character formula is shown at the end of Section 4. For more examples, we refer the reader to Solomon \cite{LS2} on rook monoids and Li, Li, Cao \cite{ZHEN} on symplectic rook monoids.

After the work of this article was outlined, the second named author met Dr. Steinberg in the Winter Meeting of the Canadian Mathematical Society in December, 2007, and learned that he independently generalized Solomon \cite{LS2}. For a finite inverse semigroup, he found the decomposition of the monoid algebra into a direct sum of matrix algebras over group rings via adding up all the Solomon idempotents in each $\mathcal D$-class. Our approach, however, makes use of cross section lattices, type maps, Weyl groups, and face lattices of polytopes. This is different from that of Steinberg. The main novelty in our work is an explicit computation of the M\"obius function of any Renner monoid in terms of dimensions of the faces of a polytope, a precise formula for Solomon central idempotents, and an ideal choice of the representatives of certain parabolic Weyl subgroups $W^*(e)$ with $e\in \Lambda$. This paper shows its close connection with algebraic groups, Lie theory (Weyl groups), and geometry (polytopes), to make it more accessible to the reader interested in algebraic groups and Lie theory.

\section{Preliminaries}
A {\it linear algebraic monoid} is an affine variety defined over an algebraically closed field $F$ together with an associative morphism and an identity. The unit group of an algebraic monoid is an algebraic group. By an irreducible algebraic monoid, we mean a linear algebraic monoid and irreducible as a variety. An irreducible monoid is called {\it reductive} if its unit group is a reductive algebraic group. The theory of reductive monoids has been developed by Putcha and Renner as a generalization of the theory of linear algebraic groups \cite{P1,  R4}. An example of reductive monoids is $M_n$, the multiplicative monoid of $n\times n$ matrices over $F$. Another example is the symplectic monoid studied in \cite{LR}.

\subsection{Type Map of Reductive Monoids}

Let $M$ be a reductive monoid with 0 and the group of units $G$. Let $B\subset G$ be a Borel subgroup, $T\subset B$ a maximal torus, and $W = N_G(T)/T$ the Weyl group. Let $\overline {N_G(T)} $ be the Zariski closure of $N_G(T)$ in $M$. Then $R= \overline {N_G(T)}/T$ is a finite inverse monoid with unit group $W$, and is called the Renner monoid. Let $\overline T$ be the Zariski closure of $T$ in $M$. Then
$$
    E(\overline T)=\{e\in \overline T \mid e^2 = e \}
$$
is a finite lattice of idempotents in $R$. There is a partial order on $E(\overline T)$; for $e, f\in
E(\overline T)$,
$
    f\le e \Longleftrightarrow ef=f=fe.
$
If $f\le e$ and $f \ne e$ then $f < e$. The set
$$
    \Lambda=\{e\in E(\overline T) \mid Be = eBe \}
$$
by definition is the {\it cross-section lattice} of $M$. Then $R = \langle W, ~\Lambda \rangle =W \Lambda W$. Suppose that $\Delta = \{\alpha_1, ..., \alpha_n\}$ is the set of simple roots relative to $T$ and $B$ and $S=\{s_{\a}\mid \a\in \Delta\}$ the set of simple reflections that generate $W$.

\begin{defi} The type map
$
    \lambda: \Lambda\to 2^{\Delta}
$
is defined by
\[
    \lambda(e) = \{\a\in\Delta \mid  s_{\a}e = es_{\a}\}.
\]
\end{defi}

To agree with Putcha and Renner \cite{P2, P3, R4}, let
$$
    \lambda^*(e) = \bigcap_{f\ge e}\lambda(f)  \qquad \lambda_*(e) = \bigcap_{f\le e}\lambda(f).
$$
Then
$$
\begin{aligned}
    &W(e)   = W_{\lambda(e)}   = \{w\in W \mid we=ew \} \\
    &W^*(e) = W_{\lambda^*(e)}, \\
    &W_*(e) = W_{\lambda_*(e)} = \{w\in W \mid we=ew = e \},
\end{aligned}
$$
are parabolic subgroups of $W$. The following proposition is from \cite {P1, PR1}.

\begin{proposition}\label{kepprop} Let $M$ be a reductive monoids with 0. For $e\in \Lambda$,
\[
    \begin{aligned}
        (i)~ &\lambda^*(e)=\{a\in \Delta\mid s_{\a}e=es_{\a}\not=e\}, \mbox{ and } \lambda_*(e)=\{a\in \Delta\mid s_{\a} e = e s_{\a} = e\}.   \\
        (ii)~ &\lambda(e)=\lambda^*(e)\sqcup \lambda_*(e).  \\
        (iii)~ &W(e) = W^*(e)\times W_*(e).  \\
        (iv)~ &W^*(e) = eW(e).
    \end{aligned}
\]
\end{proposition}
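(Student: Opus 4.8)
The plan is to reduce all four parts to one structural fact about the cross-section lattice --- the identity $\lambda^*(e)=\lambda(e)\setminus\lambda_*(e)$ --- and to derive everything else from it by Coxeter-group combinatorics and elementary arithmetic in $R$. I begin with the $\lambda_*$-assertion of (i). Since $W_*(e)=W_{\lambda_*(e)}=\{w\in W\mid we=ew=e\}$, a simple reflection $s_{\a}$ lies in $W_*(e)$ precisely when $s_{\a}e=es_{\a}=e$, while a simple reflection $s_{\a}$ lies in a standard parabolic $W_J$ precisely when $\a\in J$; hence $\a\in\lambda_*(e)\iff s_{\a}\in W_{\lambda_*(e)}\iff s_{\a}e=es_{\a}=e$. (The inclusion $\supseteq$ is also visible directly: if $s_{\a}e=es_{\a}=e$ and $f\le e$, then $ef=fe=f$ forces $s_{\a}f=(s_{\a}e)f=ef=f$ and $fs_{\a}=f$, so $\a\in\lambda(f)$ for every $f\le e$.)

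The real content is the identity $\lambda^*(e)=\lambda(e)\setminus\lambda_*(e)$; this is where the geometry of reductive monoids is genuinely used. Since $\lambda^*(e)\subseteq\lambda(e)$ (take $f=e$ in the defining intersection), it is equivalent to the conjunction $\lambda^*(e)\cap\lambda_*(e)=\emptyset$ and $\lambda(e)\setminus\lambda_*(e)\subseteq\lambda^*(e)$; informally, both say that a simple root drops out of $\lambda(\cdot)$, as one moves up from $e$ in $\Lambda$, exactly at the stage where its reflection first moves the idempotent. I would obtain these from the structure theory of cross-section lattices: the disjointness reduces, on passing to the reductive monoid $eMe$ (which again has a zero, has cross-section lattice $\{f\in\Lambda\mid f\le e\}$, unit group $eC_G(e)$, Weyl group $W^*(e)$ and set of simple roots $\lambda^*(e)$, its type map being the evident restriction), to the fact that $\bigcap_{f\in\Lambda}\lambda(f)=\emptyset$ for any reductive monoid with zero; the reverse inclusion is obtained dually (from the reductive monoid attached to $\{f\in\Lambda\mid f\ge e\}$), or one simply quotes the identity $\lambda^*(e)=\lambda(e)\setminus\lambda_*(e)$ from the Putcha--Renner description of type maps in \cite{P1, PR1}. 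Granting it, the $\lambda^*$-assertion of (i) is immediate, $\lambda^*(e)=\lambda(e)\setminus\lambda_*(e)=\{\a\mid s_{\a}e=es_{\a}\}\setminus\{\a\mid s_{\a}e=es_{\a}=e\}=\{\a\mid s_{\a}e=es_{\a}\ne e\}$, and (ii) is the restatement $\lambda(e)=\lambda^*(e)\sqcup\lambda_*(e)$.

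For (iii), note first that $W_*(e)=\{w\mid we=ew=e\}$ is normal in $W(e)=\{w\mid we=ew\}$: if $w\in W(e)$ and $u\in W_*(e)$, then $we=ew$ gives $wew^{-1}=e$, so $(wuw^{-1})e=wu(ew^{-1})=w(ue)w^{-1}=wew^{-1}=e$ and, symmetrically, $e(wuw^{-1})=e$, whence $wuw^{-1}\in W_*(e)$. Thus $W_{\lambda_*(e)}$ is a normal standard parabolic subgroup of the Coxeter group $W_{\lambda(e)}$, which forces $\lambda(e)=\lambda_*(e)\sqcup(\lambda(e)\setminus\lambda_*(e))$ with no bond of the Coxeter diagram joining the two pieces and $W_{\lambda(e)}=W_{\lambda_*(e)}\times W_{\lambda(e)\setminus\lambda_*(e)}$; by the identity of the second paragraph the second factor is $W^*(e)$, so $W(e)=W^*(e)\times W_*(e)$. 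For (iv), consider the map $\varphi\colon W(e)\to R$, $w\mapsto ew$. For $w,v\in W(e)$ one has, using $we=ew$, that $(ew)(ev)=e(we)v=e(ew)v=ewv$; thus $\varphi$ is a homomorphism of monoids onto the subgroup $eW(e)$ of $R$, with identity $e=\varphi(1)$ and kernel $\{w\in W(e)\mid ew=e\}=W_*(e)$. Hence the restriction of $\varphi$ to the complementary factor $W^*(e)$ from (iii) is injective; and since $e(w^{*}w_{*})=w^{*}(ew_{*})=w^{*}e=ew^{*}$ for $w^{*}\in W^*(e)$ and $w_{*}\in W_*(e)$ (using $ew^{*}=w^{*}e$ because $w^{*}\in W(e)$, and $ew_{*}=e$ because $w_{*}\in W_*(e)$), that restriction is onto $eW(e)$. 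Therefore $\varphi$ restricts to an isomorphism $W^*(e)\cong eW(e)$.

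The one genuinely non-formal step is the structural identity $\lambda^*(e)=\lambda(e)\setminus\lambda_*(e)$ of the second paragraph: this is the point at which properties of reductive monoids --- equivalently, the defining axioms of a type map --- are indispensable, and without it neither (ii) nor the identification of the second factor in (iii) follows from group theory alone. The hypothesis that $M$ has a zero is likewise essential here: for $M=G$ one has $\lambda^*(1)=\lambda_*(1)=\Delta$, so (ii) already fails. Everything else --- the $\lambda_*$-part of (i), the normality computation in (iii), and the homomorphism argument in (iv) --- is routine.
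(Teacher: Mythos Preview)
The paper does not prove this proposition at all: it simply records it as a known fact from the Putcha--Renner structure theory, writing ``The following proposition is from \cite{P1, PR1}.'' So there is no proof in the paper to compare against; you have supplied one where the authors defer to the literature.

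Your argument is sound. You correctly isolate the single non-formal ingredient, the identity $\lambda^*(e)=\lambda(e)\setminus\lambda_*(e)$, and derive everything else from it: the $\lambda_*$-description in (i) from the given equality $W_*(e)=W_{\lambda_*(e)}=\{w\mid we=ew=e\}$; the $\lambda^*$-description and (ii) as immediate consequences; (iii) from the Coxeter-theoretic fact that a normal standard parabolic forces a disconnection of the Coxeter diagram; and (iv) via the kernel computation for the monoid homomorphism $w\mapsto ew$. Your remark that the hypothesis $0\in M$ is essential (else $\lambda^*(1)=\lambda_*(1)=\Delta$ and (ii) fails) is also apt. The only place you lean on outside structure theory is exactly where the paper does too---the identity for $\lambda^*$---so in effect your write-up is a fleshed-out version of what lies behind the citation, not an alternative to anything argued in the present paper.
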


\begin{lemma} \label{lemma de} Use the notation above. Let $d_e = |W| / |W(e)| $. Then
$
    |WeW| = d_e^2 |W^*(e)|.
$
\end{lemma}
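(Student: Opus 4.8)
The plan is to count the $T$-orbits (equivalently, elements of $R$) inside the double coset $WeW$ by analyzing the stabilizer structure. First I would recall that $R = W\Lambda W$, so every element of $WeW$ has the form $wew'$ with $w, w' \in W$, and the only relations among such expressions come from the subgroup $W(e)$, which is precisely the set of elements of $W$ commuting with $e$. The natural first step is therefore to describe when $w_1 e w_1' = w_2 e w_2'$; I would show this happens exactly when $w_2^{-1}w_1 \in W(e)$ and $w_1' w_2'^{-1}$ differs from $w_2^{-1}w_1$ by an element of $W_*(e)$ (the part that annihilates $e$ on the relevant side), so that the fiber of the map $W \times W \to WeW$ has size $|W(e)|\cdot|W_*(e)|$. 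This gives a count of $|W|^2/(|W(e)|\,|W_*(e)|)$.

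Next I would rewrite this using the factorizations from Proposition 2.2. By part (iii), $|W(e)| = |W^*(e)|\cdot|W_*(e)|$, so
\[
    |WeW| = \frac{|W|^2}{|W(e)|\,|W_*(e)|} = \frac{|W|^2}{|W(e)|^2}\cdot\frac{|W(e)|}{|W_*(e)|} = d_e^2\,\frac{|W(e)|}{|W_*(e)|} = d_e^2\,|W^*(e)|,
\]
where the last equality is again part (iii) of Proposition 2.2 (or equivalently part (iv), since $W^*(e) = eW(e) \cong W(e)/W_*(e)$ as sets). This is exactly the claimed formula.

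An alternative, perhaps cleaner, route that I would keep in reserve: use the standard fact that $WeW$ is a single $\mathscr{J}$-class of $R$ whose $\mathscr{R}$-classes and $\mathscr{L}$-classes are each indexed by the cosets $W/W(e)$ (there are $d_e$ of them), while the group $\mathscr{H}$-class at the idempotent $e$ is isomorphic to $W^*(e)$ by Proposition 2.2(iv) (since $eW(e) = W^*(e)$ and $W_*(e)$ acts trivially). Then $|WeW| = (\text{number of }\mathscr{R}\text{-classes}) \times (\text{number of }\mathscr{L}\text{-classes}) \times |\mathscr{H}\text{-class}| = d_e \cdot d_e \cdot |W^*(e)|$ by the egg-box structure of a $\mathscr{J}$-class containing an idempotent.

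The main obstacle is the bookkeeping in the first approach: correctly identifying the fiber of $W\times W \to WeW$ requires care about which side $W_*(e)$ acts on and verifying that $wew' = e$-type collapses are fully accounted for, using Proposition 2.2(i) to control exactly how simple reflections (hence elements of $W_*(e)$) interact with $e$. Once that fiber size is pinned down, the rest is the short algebraic manipulation above. If the fiber computation proves delicate, I would fall back on the Green's-relations argument, which sidesteps it by invoking the general semigroup-theoretic egg-box structure together with the already-cited identifications of the $\mathscr{H}$-class and the index set.
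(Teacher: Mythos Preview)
Your proposal is correct and follows essentially the same route as the paper: obtain the count $|WeW| = |W|^2 / (|W(e)|\cdot|W_*(e)|)$ and then rewrite it via $W(e) = W^*(e)\times W_*(e)$ from Proposition~\ref{kepprop}(iii). The only difference is that the paper does not carry out the fiber analysis you sketch but simply quotes this formula from \cite{LLC} (the remark after Theorem~2.1 there); your direct stabilizer argument and your alternative Green's-relations/egg-box argument are both valid ways of supplying that citation.
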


\begin{proof} By the remark that follows Theorem 2.1 of \cite {LLC} we obtain
\[
    |WeW| = \frac {|W|^2} { |W(e)|\times |W_*(e)| }.
\]
In view of $(iii)$ of Proposition \ref{kepprop} we have
$$
    |WeW| = \frac {|W|^2} { |W(e)|^2} \times |W^*(e)| = d_e^2 \times |W^*(e)|. \qquad \qquad\qquad\Box
$$
\end{proof}

A reductive monoid is $\J$-{\it irreducible} if its cross-section lattice has a unique minimal non-zero idempotent. The cross-section lattices of $\J$-irreducible monoids are completely determined by Putcha and Renner \cite {PR1}. It is summarized below.

\begin{theorem}\label{recipe}
(Theorem 4.16 of \cite{PR1}) Let $M$ be a $\J$-irreducible monoid associated with a dominant weight $\mu$ and $J=\{\a\in \Delta \mid s_{\a}\mu=\mu\}$. Then for $e\in \Lambda\setminus\{0\}$,
\begin{enumerate}
    \item $\lambda^*(e)=X\subset\Delta, \mbox{ where X has no connected component lies entirely in } J$.
    \item $\lambda_*(e)=\{\a\in J\setminus \lambda^*(e) \mid s_{\a}s_{\beta}=s_{\beta}s_{\a} \mbox{ for all } \beta \in \lambda^*(e)\}.$
    \item $\lambda^*(e_0) = \emptyset$ and $\lambda(e_0) = \lambda_*(e_0) = J$, where $e_0$ is the unique minimal element in $\Lambda$.
\end{enumerate}

\end{theorem}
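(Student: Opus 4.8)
The plan is to make $M$ concrete via the representation defining it and then to read $\Lambda$, $\lambda^{*}$ and $\lambda_{*}$ off the weight polytope. Since $\Lambda$ and the type map are intrinsic and, for a $\J$-irreducible monoid, depend only on $\mu$ (in fact only on $J$), I would first replace $M$ by the canonical model $M_{\mu}=\overline{F^{*}\rho_{\mu}(G)}\subseteq\mathrm{End}(V_{\mu})$, where $\rho_{\mu}$ is the irreducible representation of highest weight $\mu$. There $\overline{T}$ is the closure of a torus acting diagonally, so its idempotents are the coordinate projections $e_{S}$ onto $\mathrm{span}(v_{\nu}\mid\nu\in S)$, where $S$ ranges over the subsets of the weight set $\Pi(\mu)$ of the form $\{\nu\mid\langle\nu,\gamma\rangle\text{ minimal}\}$ for a cocharacter $\gamma$; equivalently $E(\overline{T})$ is the face lattice of $\mathcal{P}=\mathrm{conv}(W\mu)$, with $0\leftrightarrow\emptyset$ and $1\leftrightarrow\mathcal{P}$.

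Next I would pin down $\Lambda$ inside $E(\overline{T})$. Writing $B=TU$ with $U$ generated by the positive root subgroups, which raise weights, one checks that $Be_{S}=e_{S}Be_{S}$ iff $\mathrm{span}(v_{\nu}\mid\nu\in S)$ is $B$-stable, i.e.\ iff $S$ is closed upward in the root order on $\Pi(\mu)$; any nonempty such $S$ contains the unique maximal weight $\mu$. I would then verify that the upward closed faces of $\mathcal{P}$ are precisely the faces $F_{X}:=\mathrm{conv}(W_{X}\mu)$ with $X\subseteq\Delta$ (a face cut out by a dominant functional carries an upward closed weight set, and these are all of them), so that $\Lambda\setminus\{0\}$ is identified with $\{F_{X}\mid X\subseteq\Delta\}$ and $0$ with $\emptyset$.

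I would then compute the type map face-theoretically. For a representative of $s_{\a}$ in $N_{G}(T)$ one has $s_{\a}e_{S}s_{\a}^{-1}=e_{s_{\a}S}$, so $s_{\a}e=es_{\a}$ iff $s_{\a}$ stabilizes the corresponding face, and $s_{\a}e=es_{\a}=e$ iff $s_{\a}$ in addition fixes that face pointwise. Applied to $e=e_{F_{X}}$: if $\a\in X$ then $s_{\a}W_{X}=W_{X}$ stabilizes $F_{X}$ and moves some vertex $w\mu$ — here one uses that no connected component of $X$ acts trivially on $\mathrm{conv}(W_{X}\mu)$, which is exactly where the hypothesis that no component of $X$ lies in $J$ enters — so $\a\in\lambda^{*}(e)$; if $\a\in J\setminus X$ commutes with all $s_{\b}$, $\b\in X$, then $s_{\a}W_{X}\mu=W_{X}s_{\a}\mu=W_{X}\mu$ and $s_{\a}$ fixes each vertex of $F_{X}$, so $\a\in\lambda_{*}(e)$; any other $\a$ fails to stabilize $F_{X}$ (if $\a\notin J$ then $s_{\a}\mu\notin W_{X}\mu$ because $\mu-s_{\a}\mu$ is a nonzero multiple of a simple root outside $X$; if $\a\in J$ but $s_{\a}$ does not commute with $W_{X}$, a short argument shows $s_{\a}$ moves a vertex). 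Hence $\lambda(e)=X\sqcup\{\a\in J\setminus X\mid s_{\a}s_{\b}=s_{\b}s_{\a}\ \forall\,\b\in X\}$, which gives item (2) together with Proposition~\ref{kepprop}(ii). Two subsets $X$ with the same connected components not contained in $J$ yield the same face $F_{X}$ (a component inside $J$ fixes $\mu$ and contributes nothing), so normalizing $X$ to have no component inside $J$ makes $e\mapsto\lambda^{*}(e)=X$ the bijection of item (1). Finally $X=\emptyset$ gives the face $\{\mu\}$, the unique smallest nonzero idempotent $e_{0}$, with $\lambda^{*}(e_{0})=\emptyset$ and $\lambda(e_{0})=\lambda_{*}(e_{0})=J$, which is item (3).

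The main obstacle is the face combinatorics packaged above: showing that the upward closed faces of $\mathrm{conv}(W\mu)$ are exactly the $\mathrm{conv}(W_{X}\mu)$, that $\mathrm{conv}(W_{X}\mu)=\mathrm{conv}(W_{X'}\mu)$ iff $X,X'$ have the same connected components outside $J$, and that $s_{\a}$ with $\a\in J$ stabilizes $\mathrm{conv}(W_{X}\mu)$ only when $s_{\a}$ commutes with $W_{X}$. This requires a careful description of which roots are ``active'' on a face of the weight polytope, of the $W$-action on its faces, and of how the stabilizer $W_{J}$ of $\mu$ sits inside the parabolic subgroups $W_{X}$; it is precisely here that the normalization in item (1) is forced. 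The remaining verifications — the equivalence $Be=eBe\Leftrightarrow S$ upward closed, the translation of $s_{\a}e=es_{\a}$ and $s_{\a}e=e$ into face symmetries, and the case $X=\emptyset$ — are then routine given the structure theory of reductive monoids recalled in Section 2.1.
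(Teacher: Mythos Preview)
The paper does not prove this theorem at all: it is quoted verbatim as Theorem~4.16 of \cite{PR1} and used as a black box, so there is no ``paper's own proof'' to compare against. Your sketch is therefore not a reproduction of anything in this article but an independent outline of the Putcha--Renner argument.

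As a sketch it is broadly on target---modeling $M$ by $\overline{F^{*}\rho_{\mu}(G)}$, identifying $E(\overline T)$ with the face lattice of the weight polytope $\mathrm{conv}(W\mu)$, and reading off $\lambda^{*}$ and $\lambda_{*}$ from how simple reflections act on faces is exactly the right geometry. The one place that needs more than a ``short argument'' is the claim that if $\alpha\in J\setminus X$ and $s_{\alpha}$ fails to commute with some $s_{\beta}$, $\beta\in X$, then $s_{\alpha}$ does \emph{not} stabilize $F_{X}=\mathrm{conv}(W_{X}\mu)$ setwise. A priori $s_{\alpha}$ could permute the vertices of $F_{X}$ nontrivially without leaving the face, which would put $\alpha$ into $\lambda^{*}(e)$ rather than outside $\lambda(e)$ and would break the identification $\lambda^{*}(e)=X$. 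Ruling this out genuinely uses that $W_{X}\mu$ is the full vertex set of a face cut by a dominant functional, so that the setwise stabilizer of $F_{X}$ in $W$ is the parabolic $W_{\lambda(e)}$ and hence any simple reflection stabilizing $F_{X}$ already lies in $X$ or centralizes $W_{X}$; you should make that step explicit rather than leave it as a remark.
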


\subsection{Face lattice of reductive monoids}
From now on, we always assume that $M$ is a reductive monoid with 0. According to Theorem 8.7 in \cite{P1} there is a polytope $P$ associated with $M$. Denote by $\F(P)$ the face lattice of $P$. We call $P$ the polytope of $M$ and $\F(P)$ the face lattice of $M$. It is a fact that $E(R) = E(\overline T) \simeq \F(P)$.  Denote by $V(P) = \{1, ..., m\}$ the set of the vertices of $P$. A subset $K \subseteq V(P)$ is referred to as a face of $P$ if $conv(K)$, the convex hull of $K$, is a face of $P$. By $\mbox{dim } K$ we mean $\mbox{dim\,} (conv(K))$. If $K$ is a face of $P$, denote by $\F(K)$ the set of faces $J$ of $P$ such that $J\subseteq K$. See \cite{BG, GZ} for more information on polytopes.

\begin{defi} Let $P$ be the polytope of $M$ and  $K$ a subset of $V(P)$. Define
\begin{eqnarray*}
    e_K (i) = \begin{cases}
                        i                       \qquad\qquad\qquad\mbox{ if } i\in K,\\
                        \text{undefined}        \quad\quad\mbox{ ~if } i\notin K.
                        \end{cases}
\end{eqnarray*}
By convention, $e_\emptyset = 0$, and $e_{V(P)}$ denotes the identity map.
\end{defi}

It follows from \cite{P, P1} that $\F(P)$ is isomorphic to $E(\overline T) = \{e_K \mid K\in \F(P)\}$ as lattices with the following isomorphism
\begin{eqnarray}\label{emap}
    \varepsilon:\quad K \mapsto e_K.
\end{eqnarray}
The set of vertices of the polytope corresponds to the set of minimal idempotents of $E(\overline T) - \{0\}$. Let W act by conjugation on $E(\overline T)$. Then we have
\begin{eqnarray} \label{eTbar}
    E(\overline T) = \bigsqcup_{e\in \Lambda} Cl_W(e),
\end{eqnarray}
where $Cl_W(e) = \{wew^{-1} \mid w\in W\}$, the $W$-conjugacy class of $e$. This action induces an action of $W$ on $\F(P)$ as follows. For $w\in W$ and $J, K \in \F(P)$,
\begin{eqnarray}\label{action}
    wJ = K, \mbox{ if  } w^{-1}e_Jw  = e_K.
\end{eqnarray}

\section{Representations}
For simplicity we assume in the remainder of this paper that the characteristic of $F$ is 0, even though most of the results are valid for positive characteristic. Let $R$ be the Renner monoid of $M$, and $A = FR$ the monoid algebra of $R$ over $F$. Clearly,
$$
    A = \Bigg\{\sum_{\sigma\in R} \alpha_\sigma \sigma  ~\Big |~ \alpha_\sigma \in F \Bigg\}.
$$
The zero element of $A$ is the linear combination with $\alpha_\sigma = 0$ for all $\sigma\in R$, and the identity element of $A$ is the element with $\alpha_\sigma = 0$ for all $\sigma\in R -  \{1\}$ but $\alpha_1 = 1 \in F$.

To investigate the representation theory of $A$, we define certain central idempotents of $A$ which were introduced by Solomon \cite {LS0} in his studying M\"{o}bius algebra of a lattice (see also \cite{ZHEN, LS2}).

Write $E=E(\overline T)$, and
let
\[
    FE =\oplus_{J \in \F(P)} Fe_J
\]
be the monoid algebra of $E$ over $F$. Thus, $FE \subseteq A$. For every $K\in \F(P)$, define
\begin{eqnarray}
    \eta_K =\sum_{J\in \F(K)}(-1)^{ \mbox{\footnotesize { dim\,}} K - \mbox{\footnotesize{ dim\,}} J }e_J \in FE.
\end{eqnarray}
Note that $\mu_{\F(P)}(J, K) = (-1)^{\dim K-\dim J}$, for $J\in \F(K)$, is the M\"{o}bius function of $\F(P)$ (see \cite {PU7}). It follows from the M\"{o}bius inversion \cite{RS} that
\begin{eqnarray}
    e_K =\sum_{J\in \F(K)}\eta_J.
\end{eqnarray}

\begin{lemma}\label{lemma JK}
    If $J$ and $K$ are faces of $P$, then $\eta_{K}\eta_{J} = \delta_{{J, K}} \eta_{J}$. So the $\eta_{K}$ are
    pairwise orthogonal idempotents of $A$.
\end{lemma}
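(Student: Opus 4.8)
The plan is to recognize this as the statement that the $\eta_K$ are the primitive idempotents of the semilattice (M\"obius) algebra $FE$, and to prove it by making the diagonalization of $FE$ explicit. The structural inputs I will use are all already available: $FE$ is a commutative subalgebra of $A$ with basis $\{e_J : J\in\F(P)\}$; by the definition of $e_K$ as a partial identity map one has $e_{K_1}e_{K_2}=e_{K_1\cap K_2}$, where $K_1\cap K_2$ is again a face of $P$ (the intersection of two faces of a polytope is a face) and is the meet $K_1\wedge K_2$ in $\F(P)$; and the M\"obius inversion identity $e_K=\sum_{J\in\F(K)}\eta_J$ recorded above. In particular $\{\eta_J : J\in\F(P)\}$ is obtained from $\{e_J\}$ by a change of basis unitriangular with respect to $\subseteq$, hence is itself a basis of $FE$.

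For each face $L\in\F(P)$ I will introduce the linear functional $\phi_L\colon FE\to F$ determined by $\phi_L(e_K)=1$ if $L\subseteq K$ and $\phi_L(e_K)=0$ otherwise. The first key step is that $\phi_L$ is an algebra homomorphism: since $e_{K_1}e_{K_2}=e_{K_1\cap K_2}$ and $L\subseteq K_1\cap K_2$ precisely when $L\subseteq K_1$ and $L\subseteq K_2$, one gets $\phi_L(e_{K_1}e_{K_2})=\phi_L(e_{K_1})\,\phi_L(e_{K_2})$, and multiplicativity on all of $FE$ follows by bilinearity. The second key step is the evaluation $\phi_L(\eta_K)=\sum_{J\in\F(K),\,L\subseteq J}\mu_{\F(P)}(J,K)$; the index set here is the interval $\{J\in\F(P): L\subseteq J\subseteq K\}$, which is empty unless $L\subseteq K$, and in general $\sum_{L\subseteq J\subseteq K}\mu_{\F(P)}(J,K)=\delta_{L,K}$ by the defining recurrence of the M\"obius function. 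Hence $\phi_L(\eta_K)=\delta_{L,K}$.

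Combining these, the map $x\mapsto(\phi_L(x))_{L\in\F(P)}$ is an algebra homomorphism $FE\to F^{\F(P)}$ that sends the basis $\{\eta_K\}$ to the standard basis of $F^{\F(P)}$; it is therefore a bijection, i.e. an isomorphism of algebras, under which each $\eta_K$ corresponds to a standard basis vector, and the standard basis vectors are pairwise orthogonal idempotents. Pulling this back gives $\eta_K\eta_J=\delta_{J,K}\eta_J$ at once; equivalently, one applies each $\phi_L$ to both sides, using $\phi_L(\eta_K\eta_J)=\phi_L(\eta_K)\phi_L(\eta_J)=\delta_{L,K}\delta_{L,J}=\delta_{J,K}\delta_{L,J}=\phi_L(\delta_{J,K}\eta_J)$, and invokes that the $\phi_L$ separate points of $FE$ (which holds because the matrix $(\phi_L(\eta_K))_{L,K}$ is the identity). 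In particular $\eta_K^2=\eta_K$ and $\eta_K\eta_J=0$ for $J\neq K$, and since $\eta_K\in FE\subseteq A$ these are pairwise orthogonal idempotents of $A$.

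I do not expect a serious obstacle; the points needing care are the verification that $\phi_L$ is multiplicative, which rests on correctly identifying $e_{K_1}e_{K_2}$ with $e_{K_1\cap K_2}$ and $K_1\cap K_2$ with the meet in $\F(P)$, and the routine M\"obius-function bookkeeping. A more pedestrian alternative would be induction on $\dim K$ (or on $|\F(K)|$), using the inversion formula to reduce $\eta_K\eta_J$ to products of $\eta$'s over smaller faces, but the homomorphism argument above is cleaner and simultaneously exhibits $FE\cong F^{\F(P)}$, which is convenient for the later sections.
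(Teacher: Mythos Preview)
Your proof is correct, but it proceeds quite differently from the paper's argument. The paper works by direct computation: it first establishes the intermediate identity
\[
e_K\eta_J=\begin{cases}\eta_J&\text{if }J\subseteq K,\\ 0&\text{otherwise,}\end{cases}
\]
by expanding $\eta_J$ and, in the case $J\nsubseteq K$, grouping terms by $X=I\cap K$ and arguing the inner alternating sum vanishes. It then expands $\eta_K\eta_J=\sum_{L\in\F(K)}(-1)^{\dim K-\dim L}e_L\eta_J$ and invokes a generalized Euler relation on the interval $[J,K]$ to finish. Your approach instead dualizes: you build the algebra homomorphisms $\phi_L$ and use only the defining recursion of the M\"obius function to get $\phi_L(\eta_K)=\delta_{L,K}$, then read off the orthogonality from the isomorphism $FE\cong F^{\F(P)}$.

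What each buys: your argument is cleaner and more structural, avoids the explicit formula $\mu(J,K)=(-1)^{\dim K-\dim J}$ and the appeal to Euler's relation, and yields the semisimplicity of $FE$ for free. The paper's argument, by contrast, isolates the identity $e_K\eta_J=[J\subseteq K]\,\eta_J$ as a separate step, and this identity is reused later (in the proof of Lemma~\ref{basis}) to show $f\eta_e=0$ for certain $f$. If you adopt your route, note that this identity also drops out of your setup via $\phi_L(e_K\eta_J)=\phi_L(e_K)\phi_L(\eta_J)=[L\subseteq K]\,\delta_{L,J}$, so nothing is lost.
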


\begin{proof}
We show first that, for any $J, K \in \F(P)$,
\begin{eqnarray*}
    e_K\eta_J = \bigg\{  \begin{aligned}     &\eta_J  \mbox{      \qquad if } J \subseteq K, \\
                                             &0       \mbox{      \qquad ~\,  otherwise }. \\
                        \end{aligned}
\end{eqnarray*}
If $J \subseteq K$, then $e_K e_I = e_{K\cap I} = e_I$ for $I \in \F(J)$, and
$$
    e_K\eta_J   = e_K \sum_{I \in \F(J)} (-1)^{\dim J - \dim I} e_I = \eta_J.
$$
If $J \nsubseteq K$, then for every $I\in \F(J)$, the face $X = I \cap K$ is a proper face of $J$. Let $M = \{I \cap K  \mid I \in \F(J) \}$. Then
\[
\begin{aligned}
     e_K\eta_J   &= \sum_{I \in \F(J)} (-1)^{\dim J - \dim I} e_{I\cap K}                      \\
                 &= \sum_{ X \in M } \bigg( \sum_{ {I \in \F(J)} \atop {I \cap K = X} } (-1)^{\dim J - \dim I}  \bigg) e_{X}   \\
                 &=0,
\end{aligned}
\]
since the inner sum is zero.
Therefore,
\[
    \eta_K\eta_J = \sum_{L \in \F(K)}(-1)^{\dim K - \dim L} e_L \eta_J = \sum_{J \subseteq L \in \F(K)}(-1)^{\dim K - \dim L} \eta_J.
\]
If $J = K$ then $\eta_K\eta_J = \eta_J$. If $J \ne K$ then $\eta_K\eta_J = 0$ by a generalized Euler's relation in Section 8.3 of \cite {BG}.      $\hfill\Box$
\end{proof}


\begin{lemma} \label {idempotent}

(i) For every $e \in E(\overline T)$ and $w\in W$, if $ew\in E(\overline T)$, then $ew = e$.

(ii) If $\sigma \in R$, there exist $w, w_1 \in W$ and unique $e, f\in E(\overline T)$ such that $\sigma = ew = w_1f$.
\end{lemma}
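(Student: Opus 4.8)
My plan is to treat the two parts in order, using the structure of $R$ as an inverse monoid together with the decomposition $R = W\Lambda W$ and the facts about $E(\overline T)$ recorded in Section 2. For part (i), suppose $e \in E(\overline T)$, $w \in W$, and $f := ew \in E(\overline T)$. Since $R$ is an inverse monoid, every idempotent is its own inverse, so $f^{-1} = f$; on the other hand $f^{-1} = (ew)^{-1} = w^{-1}e^{-1} = w^{-1}e$ because $e$ is idempotent. Hence $ew = w^{-1}e$, and multiplying on the left by $e$ gives $ew = e(ew) = e(w^{-1}e) = (ew^{-1})e$. A cleaner route: from $f = ew$ idempotent we get $ew = (ew)(ew) = e(we)w$, so $ew = e(we)w$; but I would instead argue directly via the torus closure. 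Every element of $E(\overline T)$ lies in $\overline T$, and $w$ acts on $\overline T$ by conjugation, so $w^{-1}ew = e_K$ for some face; writing $ew = (w^{-1}ew)w = (w^{-1}ew)\cdot w$ does not immediately close. The argument I actually want is: $ew \in E(\overline T) \subseteq \overline T$ commutes with $e \in \overline T$ since $\overline T$ is commutative, so $ew = e(ew) = (ew)e$ forces (using $ew$ idempotent and $e$ idempotent) $ew \le e$; symmetrically, from $e = (ew)w^{-1}$ and the same commutativity one gets $e \le ew$ once one checks $ew^{-1}\cdot\text{(something)}$ — so $ew = e$ by antisymmetry of $\le$ on $E(\overline T)$. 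I expect the main obstacle to be pinning down this antisymmetry step rigorously, i.e. showing both $ew \le e$ and $e \le ew$; the key leverage is that $\overline T$ is a commutative monoid, all the elements involved are idempotents in it, and the partial order $f \le e \iff ef = f$ makes $(E(\overline T), \le)$ a poset in which $e$ and $ew$ satisfy $e\cdot ew = ew$ and $ew \cdot e = ew$ from one side, and the analogous relations from $e = (ew)w^{-1} \in E(\overline T)$ from the other.

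For part (ii), let $\sigma \in R$. Since $R$ is an inverse monoid, $\sigma\sigma^{-1}$ and $\sigma^{-1}\sigma$ are idempotents of $R$, and every idempotent of $R$ lies in $E(\overline T)$ (this is recorded in the excerpt: $E(R) = E(\overline T)$). Set $e := \sigma\sigma^{-1} \in E(\overline T)$ and $f := \sigma^{-1}\sigma \in E(\overline T)$. Then $e\sigma = \sigma\sigma^{-1}\sigma = \sigma$ and $\sigma f = \sigma$, so $\sigma = e\sigma = \sigma f$. It remains to exhibit $w, w_1 \in W$ with $e\sigma = ew$ and $\sigma f = w_1 f$. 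From $R = W\Lambda W$ we may write $\sigma = u g v$ with $u, v \in W$ and $g \in \Lambda$; then $\sigma = (uv)(v^{-1}gv) = w\,(v^{-1}gv)$ is of the form (Weyl element)$\cdot$(idempotent), and symmetrically $\sigma = (ugu^{-1})(uv)$ is (idempotent)$\cdot$(Weyl element). Alternatively, and more in keeping with the inverse-monoid viewpoint: in any inverse monoid the $\mathscr{L}$- and $\mathscr{R}$-classes of $\sigma$ are governed by $f = \sigma^{-1}\sigma$ and $e = \sigma\sigma^{-1}$, and since $e\,\mathscr{R}\,\sigma$ there is a unit (a $W$-element, using that the group of units is $W$ and $R$ is unit-regular — indeed $R = W\Lambda W$) $w$ with $\sigma = ew$; dually $\sigma = w_1 f$.

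Finally I must prove uniqueness of $e$ and $f$. Suppose $\sigma = ew = e'w'$ with $e, e' \in E(\overline T)$ and $w, w' \in W$. Then $e' = \sigma w'^{-1}$ and $e = \sigma w^{-1} = e'w'w^{-1}$, so $e = e'\,\tau$ with $\tau := w'w^{-1} \in W$ and $e \in E(\overline T)$; by part (i), $e = e'$. Dually, if $\sigma = w_1 f = w_1'f'$ then $f = f'$ after applying part (i) to $f = \tau' f'$ with $\tau' = w_1'^{-1}w_1$ (or by taking inverses to reduce to the right-hand version just proved). This is where part (i) does the real work, so the logical order is: prove (i) first, then derive existence in (ii) from $R = W\Lambda W$ together with $E(R) = E(\overline T)$, then derive uniqueness in (ii) from (i). The one place demanding care is making sure the reduction in the uniqueness argument lands an element of $E(\overline T)$ on the correct side so that part (i) literally applies; if it lands on the wrong side one passes to inverses, using that $E(\overline T)$ is closed under inversion (idempotents are self-inverse) and that $W$ is a group.
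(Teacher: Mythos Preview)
Your proposal is correct and follows essentially the same route as the paper: for (i) you show $ew\le e$ and $e\le ew$ in the idempotent order using $e(ew)=ew$ and $(ew)\bigl((ew)w^{-1}\bigr)=e$, and for (ii) you obtain existence from $R=W\Lambda W$ (the paper simply cites unit regularity $R=E(\overline T)W=WE(\overline T)$) and uniqueness by reducing $ew=e'w'$ to $e=e'(w'w^{-1})$ and invoking (i). The detours through the inverse-monoid identities and the commutativity of $\overline T$ are unnecessary---the two one-line computations above already give both inequalities without ever commuting $e$ past $ew$---so you can safely delete those paragraphs and present the argument as cleanly as the paper does.
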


\begin{proof}
To show that $(i)$ is true, let $e_1 = ew$. Then $ee_1 = e_1$. It follows that $e_1\le e$. On the other hand, by $e_1 w^{-1} = e$ we have $e_1e = e$, which means that $e\le e_1$. Thus $e_1=e$. This proves (i). For $(ii)$, notice that $R$ is unit regular, i.e., $R = E(\overline T)W = W E(\overline T)$. There exist $e\in E(\overline T)$ and $w \in W$ such that
$
    \sigma = e w.
$
If $ew = e'w'$ for $e'\in E(\overline T)$ and $w' \in W$, then $e = e'w'w^{-1}$. By $(i)$, we have $e=e'$ and so $e$ is unique. Similarly, there exist $w_1 \in W$ and unique $f\in E(\overline T)$ such that $\sigma = w_1f$.
$\hfill\Box$
\end{proof}

It follows from (\ref{emap}) and Lemma \ref{idempotent} that, for any $\sigma \in R$, there exist $w, w_1 \in W$ and unique faces $I, J$ of $P$ such that $\sigma = e_Iw = w_1e_J$. This leads to the following definition.

\begin{defi} \label {domain}
Let $\sigma = e_I w = w_1 e_J \in R$ be as above. Then $I$ is called the domain of $\sigma$ and denoted by $I(\sigma)$; and $J$ the range of $\sigma$ and denoted by $J(\sigma)$.
\end{defi}

If $I$ and $J$ are the domain and range of $\sigma$ respectively, then $\sigma = e_I w e_J$ and maps $I$ to $J$. The product of $\sigma, \tau \in R$ is regarded as $i(\sigma\tau) = (i\sigma)\tau$ if $i\in I(\sigma)$ and $i\sigma\in I(\tau)$, for $i\in V(P)$. If $w\in W$ then $I(w) = J(w) = V(P)$.

\begin{lemma} \label{conjugate}

(i) $E(WeW) = Cl_W(e)$.

(ii) If $\sigma \in WeW$ and $I=I(\sigma), J=J(\sigma)$, then $e_I, e_J \in Cl_W(e)$.

(iii) If $f\in Cl_W(e)$ and $f\ne e$, then $ef < e$ and $ef < f$. In other words, different elements in $Cl_W(e)$ are not comparable.
\end{lemma}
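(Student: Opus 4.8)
The plan is to establish (i) first and then read (ii) and (iii) off from it, using the facts recorded earlier that $E(R)=E(\overline T)$, that $E(\overline T)=\bigsqcup_{g\in\Lambda}Cl_W(g)$ (equation (\ref{eTbar})), and that $R=W\Lambda W$. For (i), the inclusion $Cl_W(e)\subseteq E(WeW)$ is clear, since each $wew^{-1}$ with $w\in W$ is an idempotent lying in $WeW$. For the reverse inclusion I would take $\sigma\in WeW$ with $\sigma^{2}=\sigma$; since $\sigma$ is an idempotent of $R$ it lies in $E(R)=E(\overline T)$, hence in $Cl_W(g)$ for a unique $g\in\Lambda$ by (\ref{eTbar}). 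Writing $\sigma=vgv^{-1}=w_1ew_2$ with $v,w_1,w_2\in W$ shows simultaneously that $g\in WeW$ and $e\in WgW$, whence $WeW=WgW$; as $\Lambda$ meets each $W\times W$-orbit of $R$ exactly once, $g=e$ and so $\sigma\in Cl_W(e)$. Part (ii) is then immediate: writing $\sigma=e_Iw=w_1e_J$ with $w,w_1\in W$ as in Definition \ref{domain}, we get $e_I=\sigma w^{-1}$ and $e_J=w_1^{-1}\sigma$, both of which lie in $WeW$ (stable under left and right multiplication by $W$) and are idempotents, so $e_I,e_J\in E(WeW)=Cl_W(e)$ by (i).

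For (iii), I would observe that $\overline T$ is commutative, so $ef=fe$, and the definition of the order on $E(\overline T)$ gives $ef\le e$ and $ef\le f$ automatically; the content is therefore that a member $f$ of $Cl_W(e)$ comparable to $e$ must equal $e$ (for if $f\ne e$ this rules out both $ef=e$ and $ef=f$, leaving $ef<e$ and $ef<f$). Here the key point is that conjugation by $w\in W$ is an automorphism $\phi_w$ of the finite semilattice $E(\overline T)$, hence an order-automorphism of the finite poset $E(\overline T)$. If $f=\phi_w(e)$ and $e\le f$, repeated application of $\phi_w$ yields a non-decreasing chain $e\le\phi_w(e)\le\phi_w^{2}(e)\le\cdots$; since $w$ has finite order $n$, $\phi_w^{n}=\mathrm{id}$, so the chain closes up to $e$, forcing $\phi_w(e)=e$, i.e.\ $f=e$. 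The case $f\le e$ is symmetric, and applying the conclusion to each element of $Cl_W(e)$ in place of $e$ yields the final assertion about incomparability.

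The one step I expect to need care is the claim invoked in (i) that $WeW=WfW$ forces $e=f$ for $e,f\in\Lambda$, together with the genuine disjointness of the decomposition (\ref{eTbar}); these are the basic transversal properties of the cross-section lattice and may be quoted from \cite{P1, R4}. Everything else is routine bookkeeping with the unit-regular, inverse structure of $R$; in particular (iii) could alternatively be argued by noting that conjugation acts on $E(\overline T)\simeq\F(P)$ by lattice automorphisms, which preserve the dimension grading, so comparable conjugate faces have equal dimension and hence coincide.
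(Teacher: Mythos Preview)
Your proof is correct; all three parts go through as written. The approaches to (i) and (iii), however, differ from the paper's.

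For (i), the paper does not invoke the transversal property of $\Lambda$ or the decomposition~(\ref{eTbar}); instead it writes an arbitrary idempotent of $WeW$ as $e_K=w_1ew_2=(w_1ew_1^{-1})(w_1w_2)$ and applies Lemma~\ref{idempotent}\,(i) directly to conclude $e_K=w_1ew_1^{-1}\in Cl_W(e)$. This is shorter and entirely self-contained, since Lemma~\ref{idempotent} has just been established. Your route is perfectly valid but imports the fact that distinct elements of $\Lambda$ lie in distinct $W\times W$-orbits, which, as you note, must be quoted from the literature.

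For (iii), the paper argues that if $ef=f$ then $f<e$, and since $f$ is $W$-conjugate to some $f'\in\Lambda$ with $f'<e$, this contradicts $f\in Cl_W(e)$. Your argument---that conjugation by $w$ is an order-automorphism of $E(\overline T)$, so a weakly increasing chain $e\le\phi_w(e)\le\phi_w^2(e)\le\cdots$ must be constant once $\phi_w^n=\mathrm{id}$---is more elementary and avoids any appeal to how the order on $E(\overline T)$ interacts with the order on $\Lambda$. Your alternative via the dimension grading on $\F(P)$ is equally clean. Either of your arguments would serve as a drop-in replacement and is arguably preferable for self-containment.
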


\begin{proof}
Clearly, $Cl_W(e)  = \{wew^{-1} \mid w\in W\} \subseteq E(WeW)$. Let $e^w = wew^{-1}$. Then for any idempotent $e_K \in E(WeW)$, there exist $w_1, w_2 \in W$ such that $e_K = w_1 e w_2 = e^{w_1} w_1w_2$. Then $e_K = e^{w_1} \in Cl_W(e)$ by $(i)$ of Lemma \ref{idempotent}. Thus $E(WeW) \subseteq Cl_W(e)$. This proves $(i)$.

To show $(ii)$, for any $\sigma \in WeW$, we have $\sigma = w_1 e w_2$ for some $w_1, w_2 \in W$.  Let $I$ be the domain of $\sigma$. Then there exists $w\in W$ such that $\sigma = e_I w$. Thus, $e_I = w_1 e w_2 w^{-1}\in Cl_W(e)$ by $(i)$. Similarly, $e_J \in Cl_W(e)$. So $(ii)$ is true.

For $(iii)$ it suffices to show that $ef \ne f$ and $ef \ne e$. If $ef = f$, then $f < e$ since $f\ne e$. Then $f$ is conjugate to some $f'\in \Lambda$ with $f' < e$, which contradicts $f\in Cl_W(e)$. Therefore, $ef \ne f$. Similarly, $ef \ne e$. This proves $(iii)$.           $\hfill\Box$
\end{proof}

Denote by $L = \varepsilon^{-1}(e)$ the unique face of $P$ with $e = e_L$, where $\varepsilon$ is as in (\ref{emap}). Let \begin{eqnarray}
    \F(e) = \{wL \mid w\in W\}
\end{eqnarray}
be the orbit of $L$ under the action of $W$ on $P$ given in (\ref{action}). From $(i)$ of Lemma \ref {conjugate}, we see $\F(e)$ consists of faces $K$ of $P$ with $e_K \in Cl_W(e)$. In fact, $\varepsilon(\F(e)) = Cl_W(e)$.

Let
\begin{eqnarray}
    \eta_e = \sum_{K \in \F(e)}\eta_K.
\end{eqnarray}

\begin{lemma} \label{centralizer} For $e, f \in \Lambda$, we have

(i) $\eta_e \eta_f = \delta_{e f} \eta_f$.

(ii) The $\eta_e$ centralizes the Renner monoid $R$.
\end{lemma}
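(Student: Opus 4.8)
For part (i), the plan is to reduce everything to Lemma \ref{lemma JK}. Since $\eta_e = \sum_{K\in\F(e)}\eta_K$ and $\eta_f = \sum_{J\in\F(f)}\eta_J$, bilinearity gives $\eta_e\eta_f = \sum_{K\in\F(e)}\sum_{J\in\F(f)}\eta_K\eta_J = \sum_{K\in\F(e)}\sum_{J\in\F(f)}\delta_{J,K}\eta_J$. If $e\neq f$ then $\F(e)$ and $\F(f)$ are disjoint: by Lemma \ref{conjugate}(i) a face $K$ lies in $\F(e)$ exactly when $e_K\in Cl_W(e)$, and by (\ref{eTbar}) the $W$-conjugacy classes $Cl_W(e)$ and $Cl_W(f)$ are disjoint for distinct $e,f\in\Lambda$. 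Hence no term survives and $\eta_e\eta_f=0$. If $e=f$ then $K=J$ forces a single surviving term for each $J\in\F(e)$, so $\eta_e\eta_e = \sum_{J\in\F(e)}\eta_J = \eta_e$. This is routine; the only thing to state carefully is the disjointness of the orbits $\F(e)$.

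For part (ii), the goal is to show $\eta_e\sigma = \sigma\eta_e$ for every $\sigma\in R$; since $R$ spans $A$ this suffices. Writing $\sigma = e_I w = w_1 e_J$ with $I = I(\sigma)$, $J = J(\sigma)$, I would first establish how $\eta_K$ interacts with an element of $W$ and with the idempotents. The key computational identity will be a conjugation rule of the form $w^{-1}\eta_K w = \eta_{wK}$ (equivalently $\eta_K w = w\,\eta_{w^{-1}K}$), which follows because $W$ acts on $\F(P)$ by lattice automorphisms (it permutes faces and preserves dimension, by the action (\ref{action}) and the remarks around (\ref{eTbar})), so it permutes the defining sums $\eta_K = \sum_{J\in\F(K)}(-1)^{\dim K-\dim J}e_J$ accordingly. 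Summing over $K\in\F(e)$ and using that $W$ permutes $\F(e)$ among itself (it is a single $W$-orbit), we get $\eta_e w = w\,\eta_e$ for all $w\in W$.

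It then remains to handle the idempotent factor, i.e.\ to show $e_I\,\eta_e = \eta_e\,e_I$ where $e_I\in Cl_W(e)$. Here I would use the projection formula from the proof of Lemma \ref{lemma JK}, namely $e_I\eta_K = \eta_K$ if $K\subseteq I$ and $e_I\eta_K = 0$ otherwise; since all the $e_I$ appearing are idempotents commuting with the relevant $e_K$'s inside the commutative algebra $FE$, we have $e_I\eta_K = \eta_K e_I$ for each single $K$, hence $e_I\eta_e = \eta_e e_I$ after summing. Combining the two pieces: $\sigma\eta_e = e_I w \eta_e = e_I \eta_e w = \eta_e e_I w = \eta_e\sigma$. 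The main obstacle is verifying the conjugation rule $w^{-1}\eta_K w = \eta_{wK}$ cleanly — specifically that $w$ conjugates $e_J$ to $e_{wJ}$ for $J\in\F(K)$ and that the face poset $\F(K)$ maps bijectively onto $\F(wK)$ with dimensions preserved; once that is in hand, everything else is bookkeeping. A minor subtlety to flag is that $e_I, e_J$ lie in $Cl_W(e)$ only when $\sigma\in WeW$, but for general $\sigma$ we just pick whichever class they belong to and apply part (i) to kill cross terms, so the argument still goes through for an arbitrary $\sigma\in R$.
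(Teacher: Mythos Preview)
Your proposal is correct and follows essentially the same route as the paper: reduce (i) to Lemma~\ref{lemma JK} via disjointness of the $W$-orbits $\F(e)$, and for (ii) establish $w\eta_K w^{-1}=\eta_{wK}$ to get $W$-centrality, then use commutativity of $FE$ for the idempotent factor, and conclude via $R=WE$. The only remark is that your final ``subtlety'' is superfluous: since $\eta_e\in FE$ and $FE$ is commutative, $e_I\eta_e=\eta_e e_I$ holds for \emph{every} $e_I\in E(\overline T)$ regardless of which conjugacy class it sits in, so no appeal to part~(i) or cross-term cancellation is needed.
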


\begin{proof}
If $e \ne f$, then $WeW \cap WfW = \emptyset$. It follows from Lemma \ref{lemma JK} that $\eta_e \eta_f = 0$. If $e = f$, it is clear $\eta_e \eta_f = \eta_f$. Thus $(i)$ is correct. To show (ii), notice that for any $w\in W$, $w\eta_e w^{-1} = \sum_{K\in \F(e)} w\eta_K w^{-1} = \sum_{K\in \F(e)} \eta_{wK} = \eta_e$, i.e., $\eta_e$ centralizes $W$. On the other hand, it is fairly easy to check $\eta_e$ centralizes $E=E(\overline{T})$. But then $R = WE$ forces $\eta_e$ centralizes $R$.               $\hfill\Box$
\end{proof}

Lemma \ref{centralizer} indicates that $\eta_e$ is in the center of $A$ and so $A\eta_e$ is a two-sided ideal of $A$. Since $e_P = \sum_{K\in \F(P)} \eta_K = 1_A \in A$ is the identity element of $A$ and $\F(P) = \bigsqcup_{e\in \Lambda}\F(e)$, we see $1_A = \bigoplus_{e\in \Lambda} \eta_e$ and
\begin{eqnarray}\label{Adecompositin}
    A = \bigoplus_{e\in \Lambda} A\eta_e
\end{eqnarray}
is a direct sum of two-sided ideals. Let
\begin{eqnarray}
        \Lambda_*(e) &=& \{ f\in \Lambda \mid f\le e\}      \\
        R_*(e)       &=& \bigcup_{f\in \Lambda_*(e)} WfW    \\
        I_*(e)       &=& \sum_{\sigma \in R_*(e) } F\sigma \label{ie}.
\end{eqnarray} \label{istare}
The following Lemma \ref{lemma ideals} is elementary; we omit its simple proof.
\begin{lemma} \label{lemma ideals}
    (i) $\Lambda_*(e)$ is an ideal of $\Lambda$ as monoids.

    (ii) $R_*(e)$ is an ideal of $R$ as monoids.

    (iii) $I_*(e)$ is an ideal of $A$ as algebras.
\end{lemma}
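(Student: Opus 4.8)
All three parts assert that a certain subset is the principal two-sided ideal generated by $e$ in the relevant structure, so the plan is to make that identification in each case and then invoke the triviality that a principal ideal is an ideal.

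For (i), recall that $\Lambda$ is a finite lattice under the order $\le$ inherited from $E(\overline T)$, and note that $\Lambda_*(e)=\{f\in\Lambda\mid f\le e\}$ is the principal ideal of this lattice generated by $e$: it is a down-set, and for $f\in\Lambda_*(e)$ and $g\in\Lambda$ one has $f\wedge g\le f\le e$, so it is closed under meets with arbitrary elements of $\Lambda$. Hence it is an ideal of $\Lambda$.

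For (ii), use that $\Lambda$ is a cross-section lattice, so $R=\bigsqcup_{f\in\Lambda}WfW$ with each $WfW$ the full $\mathscr{J}$-class of $f$, and that for $f,e\in\Lambda$ one has $WfW\subseteq ReR$ if and only if $f\le e$. Consequently the principal two-sided ideal $ReR$ of the monoid $R$ equals $\bigcup_{f\in\Lambda,\,f\le e}WfW=R_*(e)$, which is therefore an ideal of $R$. One can also argue directly, without the $\mathscr{J}$-order: given $\sigma\in WfW$ with $f\le e$ and $\tau\in R$, the product $\sigma\tau$ lies in the ideal $RfR\subseteq ReR$, hence in some $WgW$ with $g\le e$, so $\sigma\tau\in R_*(e)$, and the symmetric argument handles $\tau\sigma$.

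For (iii), transport (ii) to the monoid algebra: since $R_*(e)=ReR$ by (ii), the subspace $I_*(e)=\sum_{\sigma\in R_*(e)}F\sigma$ coincides with $AeA$, the two-sided ideal of $A=FR$ generated by $e$; equivalently, the $F$-span of any monoid ideal of $R$ is an ideal of $FR$, since $R$ spans $A$. The only ingredient in the whole argument that is not purely formal is the one used in (ii): that for $f,e\in\Lambda$ one has $WfW\subseteq ReR$ iff $f\le e$, which is part of the cross-section-lattice structure recalled in Section~2. Granting it, all three assertions collapse to the fact that a principal two-sided ideal is a two-sided ideal.
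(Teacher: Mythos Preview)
Your argument is correct. The paper itself omits the proof entirely, calling the lemma ``elementary,'' so there is nothing to compare against; your write-up supplies exactly the routine verification the authors skipped. The one substantive input you flag---that for $f,e\in\Lambda$ one has $WfW\subseteq ReR$ iff $f\le e$, i.e.\ that the natural order on the cross-section lattice coincides with the $\mathscr{J}$-order---is indeed a standard property of cross-section lattices (see Putcha's monograph cited as \cite{P1}), and once granted, parts (ii) and (iii) are immediate as you say. Note that for the bare ideal property in (ii) you only need the easy direction $f\le e\Rightarrow f=efe\in ReR$, together with the fact that every element of $R$ lies in some $WgW$; the full ``iff'' is what gives the identification $R_*(e)=ReR$.
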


\begin{lemma} \label{basis} Use the notation above.

(i) Let $e, f\in \Lambda$ and $\sigma\in WfW$. If $f<e$ or $f$ is not related to $e$, then $\sigma\eta_e = 0$.

(ii) $I_*(e) = \bigoplus_{f \le e} A\eta_f$.

(iii) The set $\{ \sigma \eta_e \mid \sigma\in WeW \}$ is an $F$-basis of $A\eta_e$.
\end{lemma}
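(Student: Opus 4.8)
## Proof proposal for Lemma \ref{basis}

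The plan is to prove the three parts in sequence, using part (i) as the engine for parts (ii) and (iii). For part (i), the key observation is that if $\sigma \in WfW$, then by Lemma \ref{conjugate}(ii) the range $J(\sigma)$ satisfies $e_{J(\sigma)} \in Cl_W(f)$. Write $\sigma = w_1 e_{J(\sigma)}$ with $w_1 \in W$. Then $\sigma\eta_e = w_1 e_{J(\sigma)} \eta_e = w_1 \sum_{K \in \F(e)} e_{J(\sigma)}\eta_K$, and by the formula for $e_K\eta_J$ established inside the proof of Lemma \ref{lemma JK}, the term $e_{J(\sigma)}\eta_K$ is $\eta_K$ when $K \subseteq J(\sigma)$ and $0$ otherwise. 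So I need to show that no $K \in \F(e)$ is contained in $J(\sigma)$. If some $K \in \F(e)$ had $K \subseteq J(\sigma)$, then $e_K e_{J(\sigma)} = e_K$, i.e. $e_K \le e_{J(\sigma)}$ with $e_K \in Cl_W(e)$ and $e_{J(\sigma)} \in Cl_W(f)$; passing to cross-section representatives this forces a relation between $e$ and $f$ in $\Lambda$ with $e \le$ (a conjugate of) $f$, contradicting either $f < e$ or $f$ and $e$ being unrelated. Hence $\sigma\eta_e = 0$.

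For part (ii), I would argue both inclusions. Since $I_*(e) = \sum_{\sigma \in R_*(e)} F\sigma$ and each such $\sigma$ lies in $WfW$ for some $f \le e$, write $\sigma = \sigma \cdot 1_A = \sigma \bigoplus_{g \in \Lambda}\eta_g = \bigoplus_{g} \sigma\eta_g$; by part (i), $\sigma\eta_g = 0$ unless $g \le f$, hence unless $g \le e$, so $\sigma \in \bigoplus_{g \le e} A\eta_g$. This gives $I_*(e) \subseteq \bigoplus_{f\le e} A\eta_f$. Conversely, $\bigoplus_{f \le e} A\eta_f = A \big(\bigoplus_{f\le e}\eta_f\big)$, and since $I_*(e)$ is a two-sided ideal of $A$ (Lemma \ref{lemma ideals}(iii)), it suffices to show $\eta_f \in I_*(e)$ for each $f \le e$. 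But $\eta_f = \sum_{K \in \F(f)}\eta_K = \sum_{K \in \F(f)} \sum_{J \in \F(K)} (-1)^{\dim K - \dim J} e_J$, an $F$-combination of idempotents $e_J$ with $J \subseteq K$ for some $K \in \F(f)$, hence $e_J \le e_K \in Cl_W(f)$, so $e_J \in WgW$ for some $g \le f \le e$; thus each $e_J \in R_*(e)$ and $\eta_f \in I_*(e)$.

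For part (iii), I would use the decomposition $A = \bigoplus_{g \in \Lambda} A\eta_g$ together with part (ii). First, $\{\sigma\eta_e \mid \sigma \in WeW\}$ spans $A\eta_e$: indeed $A\eta_e$ is spanned by $\{\tau\eta_e \mid \tau \in R\}$, and writing $\tau \in WfW$ we have $\tau\eta_e = 0$ unless $f$ relates to $e$ with $f \not< e$ and $e \not< f$ — i.e. (since $\Lambda$ is a lattice and conjugacy classes are handled by Lemma \ref{conjugate}) only the $\mathscr{J}$-class $WeW$ contributes, after reducing $\tau\eta_e$ to a sum of terms $w_1\eta_K$ with $K \in \F(e)$, each of which can be rewritten as $\sigma\eta_e$ for a suitable $\sigma \in WeW$. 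For linear independence, I count dimensions: by Lemma \ref{lemma de}, $|WeW| = d_e^2|W^*(e)|$, so the spanning set has at most $d_e^2|W^*(e)|$ elements; on the other hand, the quotient $A\eta_e \cong I_*(e)/\sum_{f < e} A\eta_f$ realizes $A\eta_e$ as the span of the images of $\sigma \in WeW$ in the Rees quotient, and these images are $F$-linearly independent because distinct $\sigma \in WeW$ are distinct monoid elements not lying in the ideal $R_*(e) \setminus WeW$. Comparing, the spanning set has exactly $|WeW|$ independent elements, so it is a basis.

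The main obstacle I anticipate is part (iii): the spanning argument requires care because $\tau\eta_e$ for $\tau \in WeW$ is generally a genuine $F$-linear combination of several monoid elements (not a single one), so "rewriting as $\sigma\eta_e$" really means showing the $F$-span of $\{\sigma\eta_e : \sigma \in WeW\}$ is closed and hits everything; and the linear independence needs the precise interplay between the Rees quotient $R/R_*(f)$ for $f$ the largest element below $e$, the idempotent decomposition, and the count from Lemma \ref{lemma de}. Parts (i) and (ii) are, by contrast, essentially bookkeeping on top of Lemmas \ref{lemma JK}, \ref{conjugate}, and \ref{lemma ideals}.
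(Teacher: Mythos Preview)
Your arguments for parts (i) and (ii) are correct and essentially match the paper's. The paper is slightly slicker in (i): instead of working with the range $J(\sigma)$ of $\sigma$, it writes $\sigma = w_1 f w_2$ and computes $f\eta_e = 0$ directly (using that no $J \in \F(e)$ satisfies $J \subseteq \varepsilon^{-1}(f)$, else $e \le f$), then invokes centrality of $\eta_e$ to get $\sigma\eta_e = w_1(f\eta_e)w_2 = 0$. Your version via $e_{J(\sigma)} \in Cl_W(f)$ amounts to the same thing.

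In part (iii) there is a genuine misstatement. You write that ``$\tau\eta_e = 0$ unless $f$ relates to $e$ with $f \not< e$ and $e \not< f$,'' which would leave only $f = e$; but part (i) says nothing about $f > e$, and indeed $\tau\eta_e \ne 0$ in general when $\tau \in WfW$ with $f > e$. Your parenthetical recovery---expanding $\tau\eta_e = w_1 e_{J(\tau)}\eta_e = \sum_{K \in \F(e),\, K \subseteq J(\tau)} w_1\eta_K$ and then noting that each $w_1\eta_K$ equals $(w_1 e_K)\eta_e$ with $w_1 e_K \in WeW$ (using Lemma \ref{conjugate}(iii) to see $K$ is the only face of $\F(e)$ contained in itself)---is correct and does prove spanning. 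But the paper avoids this case altogether with a cleaner trick: since $A\eta_e \subseteq I_*(e)$ by (ii), any $\alpha \in A\eta_e$ already has an expansion $\alpha = \sum_{\sigma \in R_*(e)} a_\sigma \sigma$ involving only $\sigma \in WfW$ with $f \le e$; applying $\alpha = \alpha\eta_e$ and part (i) then kills every term except those with $f = e$. This sidesteps the $f > e$ case entirely.

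For linear independence, your Rees-quotient count works (one checks, via the same argument as in your (ii), that $\bigoplus_{f<e} A\eta_f = \sum_{\sigma \in R_*(e)\setminus WeW} F\sigma$, so $A\eta_e \cong I_*(e)/\bigoplus_{f<e}A\eta_f$ has dimension $|WeW|$). The paper instead does a single global count: $\dim A = |R| = \sum_{e\in\Lambda}|WeW| \ge \sum_{e\in\Lambda}\dim A\eta_e = \dim A$, forcing $\dim A\eta_e = |WeW|$ for every $e$ at once. Both arguments are fine; the paper's is shorter.
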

\begin{proof}
Let $K$ be the face of $P$ such that $f=e_K$ as in (\ref{emap}). If $f<e$ or $f$ is not related to $e$, then for all $J\in \F(e)$ we have $J \nsubseteq K$ (otherwise, $e\le f$). It follows from (\ref{eKetaJ}) that
$$
    f\eta_e = e_K\eta_e = e_K \sum_{J \in \F(e)} \eta_J = \sum_{J \in \F(e)} e_K \eta_J = 0.
$$
The result $(i)$ follows from $(ii)$ of Lemma \ref{centralizer}.

To show $(ii)$, let $A_*{(e)} = \sum_{f \le e} A\eta_f$. If $f\le e$, then $\eta_f \in I_*{(f)} \subseteq I_*(e)$. Then $A_*{(e)} \subseteq I_*(e)$. Now we prove the inverse inclusion. It suffices to show that if $\sigma \in WfW$ with $f\le e$, then $\sigma \in A_*{(e)}$. There are $w_1, w_2 \in W$ such that $\sigma = w_1 f w_2$, since $R = W\Lambda W$. Note that $1_A = \sum_{g\in \Lambda} \eta_g$, by $(i)$ we obtain
$$
    f = \sum_{g\in \Lambda} f \eta_g = \sum_{g \le e} f \eta_g \in A_*{(e)}.
$$
It follows from $(ii)$ of Lemma \ref{centralizer} that $\sigma = w_1 f w_2 \in w_1 A_*{(e)} w_2 = \sum_{g\le e} w_1 A w_2 \eta_g = \sum_{g\le e} A\eta_g = A_*{(e)}$. This proves $(ii)$.

For $(iii)$, note that $\eta_e \in I_*(e)$ and $A\eta_e \subseteq I_*(e)$. If $\alpha \in A\eta_e$, by (\ref{ie}) we can write
\[
    \alpha = \sum_{\sigma \in WfW, ~f\le e} a_{\sigma} \sigma
\]
with $a_{\sigma} \in F$, and hence
$\alpha = \alpha\eta_e = \sum_{\sigma \in WeW} a_{\sigma} (\sigma \eta_e) $ by $(i)$. In other words, the set $\{ \sigma \eta_e \mid \sigma\in WeW \}$ spans $A\eta_e$. On the other hand, $\dim A = |R| = \sum_{e \in \Lambda} |WeW| \ge \sum_{e \in \Lambda} \dim A\eta_e = \dim A$. This indicates $\dim A\eta_e = |WeW|$, which proves $(iii)$.
$\hfill\Box$
\end{proof}

We describe structures of $A\eta_e$. Let $L=\varepsilon^{-1}(e)$ be the unique face of $P$ with $e = e_L$. Then for any $J \in \F(e)$ there is $w\in W$ such that $w(L) = J$ and $w^{-1} e_{L} w = e_J$. Hence
$$
    \mu_J = e_{L} w e_J
$$
is an element of $R$ and maps $L$ to $J$. Similarly $\mu_J^-= e_J w^{-1} e_{L} \in R$ and maps $J$ to $L$ and \begin{eqnarray}\label{kk}
    \mu_J^-\mu_J = e_J w^{-1} e_{L} w e_J = e_J.
\end{eqnarray}

We define a projection from $WeW$ to $W^*(e)$. If $\sigma\in WeW$ with $e\in \Lambda$ then from Lemma \ref{idempotent} there exit $w\in W$ and unique $I, J \in \mathcal F(P)$ such that
\begin{eqnarray}\label{sigmaJK}
    \sigma = e_I w e_J.
\end{eqnarray}
Consider $p(\sigma) = \mu_I\sigma\mu_J^-$. It follows easily that $p(\sigma)$ maps ${L}$ to ${L}$ and $p(\sigma) e = e p(\sigma)$, i.e., $p(\sigma) \in W(e)$. But then $p(\sigma) = e p(\sigma)$ forces  $p(\sigma)\in eW(e)$. However, $(iv)$ of Proposition \ref{kepprop} tells us that $eW(e) = W^*(e)$; we can now define
\begin{eqnarray}
    p(\sigma) =\mu_I\sigma\mu_J^-\in W^*(e).
\end{eqnarray}
By (\ref{kk}) and (\ref{sigmaJK}),
$
    \sigma = \mu_I^- \mu_I \sigma \mu_J^- \mu_J = \mu_I^-p(\sigma) \mu_J.
$
For any $\sigma\in R$, the $p(\sigma)$ is unique: If $I, J\in \F(P)$ and $\mu_I^-p(\sigma) \mu_J = \mu_I^-q(\sigma) \mu_J$ with $p(\sigma), q(\sigma)\in W^*(e)$ then $p(\sigma) = q(\sigma)$. If $\sigma\in W$ then $p(\sigma) = \sigma \in W$.

Let $B_e = F W^*(e)$ be the group algebra of $W^*(e)$ over $F$ where $e\in \Lambda $. Then each $B_e$ is a semisimple associative algebra \cite {NJ} and $\dim B_e = |W^*(e)|$, since the characteristic of $F$ is 0.  Let $d_e = |W| / |W(e)|$. Then
\begin{eqnarray}\label{ae}
    A_e = {\bf M}_{d_e}(B_e)
\end{eqnarray}
is the $F$-algebra of all matrices with entries in $B_e$ and rows and columns indexed by faces $I, J$ of $\F(e)$. It is easily seen that $\dim A_e = d_e^2 |W^*(e)|$.

If $e=0$, then $A_e = F$ and $\psi_e: A\eta_e = F e \rightarrow F$ given by $\psi_e(e) = 1$ is an algebraic isomorphism. If $e = 1$, then $A_{e} = B_{e} = FW$ and $\psi_{e}: A\eta_{e} \rightarrow FW$ defined by $\psi_{e}(\sigma \eta_{e}) = \sigma$ for $\sigma\in W$ is an isomorphism as algebras. More generally, we have the following result.

\begin{theorem}\label{isomorphism}
(i) The $A\eta_e$ is isomorphic to $A_e$ as an $F$-algebra.

(ii) $A \simeq \bigoplus_{e\in \Lambda } A_e$, and $A$ is a semisimple algebra.
\end{theorem}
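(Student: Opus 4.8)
The plan is to prove (i) by exhibiting an explicit algebra isomorphism $\psi_e\colon A\eta_e\to A_e={\bf M}_{d_e}(B_e)$ and then deduce (ii) by summing over $\Lambda$. For (i), recall from Lemma~\ref{basis}(iii) that $\{\sigma\eta_e\mid \sigma\in WeW\}$ is an $F$-basis of $A\eta_e$, and that every $\sigma\in WeW$ factors uniquely as $\sigma=\mu_I^-\,p(\sigma)\,\mu_J$ with $I=I(\sigma)$, $J=J(\sigma)\in\F(e)$ and $p(\sigma)\in W^*(e)$. For a face $J\in\F(e)$ and $x\in W^*(e)$, write $E_{I,J}(x)$ for the matrix in $A_e$ with $x$ in position $(I,J)$ and $0$ elsewhere. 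I would \emph{define} $\psi_e$ on the basis by $\psi_e(\sigma\eta_e)=E_{I(\sigma),J(\sigma)}\big(p(\sigma)\big)$ and extend $F$-linearly. Since $|WeW|=d_e^2|W^*(e)|=\dim A_e$ by Lemma~\ref{lemma de}, and the $E_{I,J}(x)$ ($I,J\in\F(e)$, $x$ ranging over a basis of $B_e$) form a basis of $A_e$, the map $\psi_e$ is automatically a linear isomorphism once we check it is well defined; the content is that it respects multiplication.

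The multiplicativity check is where the work lies. Given $\sigma,\tau\in WeW$, I would compute $(\sigma\eta_e)(\tau\eta_e)=(\sigma\tau)\eta_e$ (using that $\eta_e$ is a central idempotent, Lemma~\ref{centralizer}) and compare with $\psi_e(\sigma\eta_e)\psi_e(\tau\eta_e)=E_{I(\sigma),J(\sigma)}(p(\sigma))\,E_{I(\tau),J(\tau)}(p(\tau))$. Matrix multiplication makes the right-hand side vanish unless $J(\sigma)=I(\tau)$, in which case it equals $E_{I(\sigma),J(\tau)}\big(p(\sigma)p(\tau)\big)$. So the crux is the following dichotomy inside $R$: if $J(\sigma)\neq I(\tau)$ then $\sigma\tau\eta_e=0$, and if $J(\sigma)=I(\tau)$ then $\sigma\tau\in WeW$ with $I(\sigma\tau)=I(\sigma)$, $J(\sigma\tau)=J(\tau)$, and $p(\sigma\tau)=p(\sigma)p(\tau)$. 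For the first case: writing $\sigma=e_{I}we_{J(\sigma)}$ and $\tau=e_{I(\tau)}w'e_{J}$, the product contains the factor $e_{J(\sigma)}e_{I(\tau)}=e_{J(\sigma)\cap I(\tau)}$, and when $J(\sigma)\neq I(\tau)$ this idempotent corresponds to a proper subface, so $\sigma\tau$ lies in $WfW$ for some $f<e$ (both $J(\sigma),I(\tau)\in\F(e)$ are $W$-conjugate to $e$, hence incomparable by Lemma~\ref{conjugate}(iii), so their meet is strictly below); then Lemma~\ref{basis}(i) gives $\sigma\tau\,\eta_e=0$. For the second case, $J(\sigma)=I(\tau)=:J$, and using $\mu_J^-\mu_J=e_J$ from \eqref{kk} together with $\sigma=\mu_{I(\sigma)}^-p(\sigma)\mu_J$ and $\tau=\mu_J^-p(\tau)\mu_{J(\tau)}$, one gets $\sigma\tau=\mu_{I(\sigma)}^-p(\sigma)\,e_J\,p(\tau)\mu_{J(\tau)}$; since $p(\sigma),p(\tau)\in W^*(e)=eW(e)$ commute with $e=e_L$ and $e_J=\mu_J\mu_J^-$ appropriately, this collapses to $\mu_{I(\sigma)}^-\,(p(\sigma)p(\tau))\,\mu_{J(\tau)}$, giving simultaneously that $\sigma\tau\in WeW$ and the uniqueness clause forces $p(\sigma\tau)=p(\sigma)p(\tau)$, $I(\sigma\tau)=I(\sigma)$, $J(\sigma\tau)=J(\tau)$. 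I should also note $\psi_e(\eta_e)=\sum_{J\in\F(e)}E_{J,J}(e\cdot 1)$, the identity of $A_e$, since $\eta_e=\sum_{J}e_J w_J w_J^{-1}\eta_e$ decomposes with each diagonal contribution $p=e$; so $\psi_e$ is unital.

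Finally, for (ii): by \eqref{Adecompositin}, $A=\bigoplus_{e\in\Lambda}A\eta_e$ as a direct sum of two-sided ideals, so by part (i), $A\simeq\bigoplus_{e\in\Lambda}A_e=\bigoplus_{e\in\Lambda}{\bf M}_{d_e}(B_e)$ as $F$-algebras. Each $B_e=FW^*(e)$ is semisimple since $\mathrm{char}\,F=0$ (Maschke), hence each matrix algebra ${\bf M}_{d_e}(B_e)$ is semisimple, and a finite direct sum of semisimple algebras is semisimple; therefore $A$ is semisimple. The main obstacle is the bookkeeping in the multiplicativity argument — specifically pinning down that the "range of $\sigma$ equals domain of $\tau$" condition is exactly what governs non-vanishing of $\sigma\tau\eta_e$, and that the projection $p$ is multiplicative on composable pairs; everything else is linear algebra plus Maschke's theorem.
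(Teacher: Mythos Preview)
Your proposal is correct and follows essentially the same route as the paper: define $\psi_e(\sigma\eta_e)=p(\sigma)E_{IJ}$, verify multiplicativity via the same two-case analysis (invoking Lemma~\ref{conjugate}(iii) and Lemma~\ref{basis}(i) when $J(\sigma)\neq I(\tau)$, and the factorization $\sigma=\mu_I^-p(\sigma)\mu_J$ when they agree), then finish by the dimension count $|WeW|=d_e^2|W^*(e)|$ and deduce (ii) from \eqref{Adecompositin} plus Maschke. One small slip: in the matching case you write $\mu_J\mu_J^-=e_J$, but in fact $\mu_J\mu_J^-=e_L=e$ (it is $\mu_J^-\mu_J$ that equals $e_J$, as in \eqref{kk}); with this correction the collapse $p(\sigma)\,e\,p(\tau)=p(\sigma)p(\tau)$ is immediate from $W^*(e)=eW(e)$, and your conclusion stands.
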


\begin{proof}
Let $E_{IJ}$ be the natural basis of matrix units in ${\bf M}_{d_e}(B_e)$.
Define an $F$-linear map $\psi_e: A\eta_e \rightarrow A_e$ as follows. For any $\sigma \in WeW$, let $I = I(\sigma), J = J(\sigma) \in \F(P)$. Then $\sigma = w e_J = e_I w e_J$ for some $w\in W$. Define
\begin{eqnarray}\label{psidefinition}
    \psi_e( \sigma \eta_e ) = p(\sigma)E_{IJ}.
\end{eqnarray}
This definition is well defined by $(iii)$ of Lemma \ref{basis}. Write $\eta = \eta_e$ and $\psi = \psi_e$. To show that $\psi$ is a homomorphism of algebras, it is suffices to show that $\psi( (\sigma \eta) (\tau \eta) ) = \psi (\sigma \eta) \psi(\tau \eta)$ for $\sigma, \tau \in WeW$. Let $p=p(\sigma), q = p(\tau)$ and let $L = I(\tau), K=J(\tau)$. If $J=L$, then $\psi (\sigma \eta) \psi(\tau \eta) = (pE_{IJ}) (qE_{LK}) = pqE_{IK}$. If $J\ne L$, then $\psi (\sigma \eta) \psi(\tau \eta) = 0$.

On the other hand, if $J=L$, then $\sigma \tau = (\mu_I^-p\mu_J)(\mu_J^-q\mu_K) = \mu_I^- pq \mu_K$. This gives $\psi( \sigma \tau \eta ) = pqE_{IK}$. But $\psi( (\sigma \eta) (\tau \eta) ) = \psi( \sigma \tau \eta )$ by $(ii)$ of Lemma \ref{centralizer}. So $\psi( (\sigma \eta) (\tau \eta) ) = \psi (\sigma \eta) \psi(\tau \eta) = pqE_{IK}$. If $J\ne L$, then $\sigma\tau = (w e_J) (e_L w_1)$ for some $w_1 \in W$ since $L = I(\tau)$. By $(i)$ and $(ii)$ of Lemma \ref{conjugate} we have $e_{J\cap L} < e_J \in E(WeW)$, and so $\sigma \tau = w_1 e_{J\cap L} w_2 \in WfW$ for some $f < e$. Thus $(\sigma \tau) \eta = 0$ by Lemma \ref{basis} $(i)$. This indicates that $\psi ((\sigma \eta) (\tau \eta)) = 0 = \psi (\sigma \eta) \psi(\tau \eta)$ . Thus $\psi$ is a homomorphism of algebras.

To show that $\psi$ is one-to-one, let $a\in \ker \psi \subseteq A\eta_e$. Since $\sigma = \mu_I^-p\mu_J$, by Lemma \ref{basis} $(iii)$ we may write $a= \sum a_{IJ}(p) (\mu_I^-p\mu_J) \eta $ where $a_{IJ}(p) \in F$. The sum is over all faces $I, J \in \F(e)$ and all $p\in W^*(e)$. Then $0= \psi(a) = \sum_{IJ}\sum_{p} a_{IJ}(p) p E_{IJ}$. This indicates $\sum_{p} a_{IJ}(p)p=0$ for all $I, J$. Thus $a_{IJ}(p) = 0$ for all $I, J$ and $p$. So $\psi$ is one-to-one. By Lemma \ref{lemma de} and Lemma \ref{basis} $(iii)$, we obtain $\dim A\eta = |WeW| = d_e^2 |W^*(e)|=\dim A_e$. Thus $\psi$ is an isomorphism.

It follows from $(i)$ and (\ref {Adecompositin}) that $A \simeq \bigoplus_{e \in \Lambda} A_e$. Because the characteristic of $F$ is zero, the algebra $A$ is semisimple.         $\quad\Box$
\end{proof}

Next, we describe the representations of $A$ and $R$. Let $\psi_e$ be as in (\ref{psidefinition}). Similarly to \cite{LS2}, for $a\in A$ define $\beta_{ij}(a) \in B_e$ for $1 \le i, j \le d_e$ by
\begin{eqnarray}\label{psidefinition2}
    \psi_e(a\eta_e) = \sum_{i,j = 1}^{d_e} \beta_{ij}(a) E_{ij}.
\end{eqnarray}
Any representation $\rho$ of $B_e$ induces a representation $\rho^*$ of $A$ as follows
\begin{eqnarray}\label{rhostardefinition}
    \rho^*(a) = \sum_{i, j = 1}^{d_e} \rho\big(\beta_{i, j}(a)\big) E_{i, j}.
\end{eqnarray}
Then
\begin{eqnarray}
    \mbox{deg}\rho^* = d_e \mbox{deg} \rho.
\end{eqnarray}

Notice that the representations of $R$ are equivalent to the representations of $A = FR$. We have the following fact.

\begin{theorem}\label{theorem 3.1}
Suppose that $\widehat{B}_e$ is a full set of inequivalent irreducible representations of $B_e$, where $e\in \Lambda$. The set $\{\rho^* \mid \rho\in \widehat{B}_e \text{ for } e\in\Lambda\}$ is a full set of inequivalent irreducible representations of $R$.
\end{theorem}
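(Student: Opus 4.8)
The plan is to leverage Theorem \ref{isomorphism}, which already does the structural heavy lifting: $A \simeq \bigoplus_{e\in\Lambda} A_e$ with $A_e = {\bf M}_{d_e}(B_e)$, together with the classical Wedderburn/Artin representation theory of matrix algebras over semisimple group algebras. The key observation is that for a semisimple algebra written as a direct sum of two-sided ideals, the irreducible representations are exactly those obtained by pulling back the irreducible representations of each summand along the projection, and these are pairwise inequivalent across distinct summands. So the work reduces to (a) identifying the irreducible representations of $A_e = {\bf M}_{d_e}(B_e)$ with those of $B_e$, and (b) checking that the construction $\rho \mapsto \rho^*$ in (\ref{rhostardefinition}) is precisely this correspondence composed with the projection $A \to A_e$.

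First I would recall the standard fact (Morita invariance, or a direct matrix-unit argument) that the map $\rho \mapsto \rho^{\sharp}$, where $\rho^{\sharp}$ acts on $V^{d_e}$ (with $V$ the space of $\rho$) by $\rho^{\sharp}\big((b_{ij})\big) = \big(\rho(b_{ij})\big)$, is a bijection between the irreducible representations of $B_e$ and those of ${\bf M}_{d_e}(B_e)$, and it preserves inequivalence. Then, composing with the algebra projection $\pi_e: A \to A_e$ coming from (\ref{Adecompositin}) — concretely $\pi_e(a) = \psi_e(a\eta_e)$ under the identification of Theorem \ref{isomorphism} — gives a full set of irreducibles of $A$ indexed by $\bigsqcup_{e\in\Lambda}\widehat{B}_e$. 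Comparing with (\ref{psidefinition2}) and (\ref{rhostardefinition}), the representation $\rho^*$ is visibly $\rho^{\sharp}\circ\pi_e$: the matrix entries $\beta_{ij}(a)$ are exactly the $B_e$-entries of $\pi_e(a)$, so $\rho^*(a) = \big(\rho(\beta_{ij}(a))\big) = \rho^{\sharp}(\pi_e(a))$. Hence $\rho^*$ is irreducible for $\rho \in \widehat{B}_e$.

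For inequivalence I would argue on two levels. Within a fixed $e$, if $\rho_1^* \sim \rho_2^*$ then restricting/factoring through $A_e$ (note $\pi_e$ is surjective, so equivalence of pullbacks forces equivalence of the original representations of $A_e$) gives $\rho_1^{\sharp}\sim\rho_2^{\sharp}$, hence $\rho_1\sim\rho_2$ by the matrix-algebra correspondence. For distinct $e \ne f$ in $\Lambda$: the central idempotent $\eta_e$ acts as the identity on every $\rho^*$ with $\rho\in\widehat{B}_e$ (since $\psi_e(\eta_e \eta_e) = \psi_e(\eta_e)$ is the identity matrix in $A_e$, using idempotency from Lemma \ref{centralizer}(i)) but annihilates every $\rho^*$ with $\rho\in\widehat{B}_f$ (because $\eta_e\eta_f = 0$ by Lemma \ref{centralizer}(i), so $\beta_{ij}(\eta_e)$ computed in $A_f$ vanishes); a central element cannot act as $1$ in one irreducible and as $0$ in an equivalent one, so $\rho^* \not\sim \rho'^*$. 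Finally, counting/completeness: the $\rho^*$ exhaust all irreducibles because $A$ is semisimple and $\sum_{e}\sum_{\rho\in\widehat{B}_e}(\deg\rho^*)^2 = \sum_e d_e^2\sum_{\rho}(\deg\rho)^2 = \sum_e d_e^2|W^*(e)| = \sum_e \dim A_e = \dim A$, matching the sum of squares of degrees of a full set.

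The main obstacle, modest as it is, is bookkeeping around the projection $\pi_e$ and making sure the "pullback of irreducible along a surjective algebra map with a complementary ideal kernel is irreducible, and the correspondence is bijective onto the irreducibles trivial on the kernel" statement is invoked cleanly; everything else is the Wedderburn theory of $\bf{M}_{d_e}(B_e)$ plus the orthogonality of the $\eta_e$ already established. One should be a little careful that $\rho^*$ as defined in (\ref{rhostardefinition}) is defined on all of $A$ (not just $A\eta_e$) — but this is automatic since $\beta_{ij}$ is defined on all of $A$ via $a \mapsto a\eta_e$, and multiplicativity of $\rho^*$ follows from multiplicativity of $a \mapsto \psi_e(a\eta_e) = \pi_e(a)$, which is the algebra homomorphism of Theorem \ref{isomorphism} precomposed with $A \to A\eta_e$.
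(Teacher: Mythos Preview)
Your proposal is correct and follows essentially the same route as the paper: the paper's proof is a single sentence invoking Theorem~\ref{isomorphism} together with Lemma~2.22 of \cite{LS2}. What you have written is precisely an explicit unpacking of that cited lemma---the Morita correspondence for ${\bf M}_{d_e}(B_e)$, the identification of $\rho^*$ with the pullback along the projection $A\to A_e$, inequivalence via the orthogonal central idempotents $\eta_e$, and completeness via the dimension count $\sum_e d_e^2|W^*(e)|=\dim A$.
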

\begin{proof}
The result follows from Lemma 2.22 of \cite{LS2} and Theorem \ref{isomorphism} above.       $\hfill\Box$
\end{proof}

\section{Character Formula}

In this section we give an analogue of the Munn-Solomon formula for calculating the character of the Renner monoid $R$ in terms of the characters of $W^*(e)$ for $e \in \Lambda$. To do this, we need to compute the value of the character of $\rho^*$ on any element of $R$ in terms of the faces in $\F(P)$.

\begin{proposition}\label{proposition 4.1} Let $e\in \Lambda$, and let $\rho$ be a representation of $W^*(e)$. Then for $\sigma\in R$,
\begin{eqnarray}\label{rhoStarProp}
    \rho^*(\sigma) = \sum_{ K \in \F(e), K \subseteq I( \sigma) } \rho(p(e_K \sigma))E_{I(e_K \sigma), J(e_K \sigma)}.
\end{eqnarray}
\end{proposition}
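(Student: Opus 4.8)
The plan is to reduce the identity to the definition \eqref{rhostardefinition} of $\rho^*$ by computing $\psi_e(\sigma\eta_e)$ directly for an arbitrary $\sigma\in R$. First I would fix $\sigma$ with domain $I=I(\sigma)$, so that $\sigma = e_I w = e_I w e_{J(\sigma)}$ by Lemma~\ref{idempotent} and Definition~\ref{domain}. Since $\eta_e = \sum_{K\in\F(e)}\eta_K$ and $e_I\eta_K = \eta_K$ when $K\subseteq I$ and $0$ otherwise (this is exactly the relation \eqref{eKetaJ} established inside the proof of Lemma~\ref{lemma JK}), I get
\[
    \sigma\eta_e = e_I w\,\eta_e = \sum_{K\in\F(e),\,K\subseteq I} w\,\eta_K
                 = \sum_{K\in\F(e),\,K\subseteq I} (w e_K)\,\eta_e,
\]
where the last equality uses that $\eta_e$ centralizes $R$ (Lemma~\ref{centralizer}(ii)) so $w\eta_K = w\eta_K\eta_e$, together with $we_K = w e_K$ lying in $WeW$ when $K\in\F(e)$. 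The point of rewriting $w e_K$ is that $e_K\sigma = e_K e_I w = e_K w$ after conjugating appropriately; more carefully, since $K\subseteq I$ we have $e_K e_I = e_K$, so $e_K\sigma = e_K w e_{J(\sigma)}$, and one checks $w e_K$ and $e_K\sigma$ determine the same element of $WeW$ up to the bookkeeping of domain and range — indeed $e_K\sigma\in WeW$ has domain $I(e_K\sigma)$ and range $J(e_K\sigma)$, and $p(e_K\sigma) = \mu_{I(e_K\sigma)}(e_K\sigma)\mu_{J(e_K\sigma)}^-$ by definition.

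Next I would apply $\psi_e$ termwise. By the definition \eqref{psidefinition} of $\psi_e$ on basis elements, each summand $(e_K\sigma)\eta_e$ maps to $p(e_K\sigma)\,E_{I(e_K\sigma),\,J(e_K\sigma)}$. Summing,
\[
    \psi_e(\sigma\eta_e) = \sum_{K\in\F(e),\,K\subseteq I(\sigma)} p(e_K\sigma)\,E_{I(e_K\sigma),\,J(e_K\sigma)}.
\]
Comparing with \eqref{psidefinition2}, this identifies the matrix entries $\beta_{ij}(\sigma)$: for each face $K\in\F(e)$ with $K\subseteq I(\sigma)$, the entry in row $I(e_K\sigma)$ and column $J(e_K\sigma)$ is $p(e_K\sigma)\in B_e$, and all other entries vanish. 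One must note here that the faces $I(e_K\sigma)$ are distinct as $K$ ranges over $\F(e)$ with $K\subseteq I(\sigma)$ — this follows because $e_K\sigma$ has domain $K$-conjugate data and the map $K\mapsto I(e_K\sigma)$ is the restriction of the action encoded in Lemma~\ref{conjugate}(ii); so there is no collision of matrix positions and no summation collapse. Finally, feeding these $\beta_{ij}(\sigma)$ into \eqref{rhostardefinition} yields exactly \eqref{rhoStarProp}.

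The main obstacle I anticipate is the careful justification of the step $\sigma\eta_e = \sum_{K\subseteq I(\sigma)} (e_K\sigma)\eta_e$, i.e.\ that the idempotent decomposition $\eta_e = \sum_{K\in\F(e)}\eta_K$ interacts with left multiplication by $\sigma$ exactly by selecting the subfaces $K$ of $I(\sigma)$ and replacing $\sigma$ by $e_K\sigma$ — this requires combining the local relation $e_I\eta_K = \delta_{K\subseteq I}\eta_K$, the centrality of $\eta_e$, and the identity $e_K\sigma = e_K w e_{J(\sigma)}$ together with $e_K\eta_e = \eta_K$ (since $K\in\F(e)$), so that $\sigma\eta_e = e_Iw\eta_e = w(e_I\eta_e)\cdot(\text{wrong order})$ — the orders do not commute naively, so the cleanest route is: $\sigma\eta_e = \sum_{K\in\F(e)} e_I w\eta_K$, and for each $K$ one writes $w\eta_K = \eta_{wK}w$ is false in general, so instead one uses $w\eta_K = (w\eta_K w^{-1})w = \eta_{wK}w$, reducing $e_I w\eta_K = e_I\eta_{wK}w$, which is nonzero only if $wK\subseteq I$, equivalently $K\subseteq w^{-1}I = I(e_K\sigma)$-type condition; untangling this indexing correctly is the delicate bookkeeping. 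The representation-theoretic content (the passage through $\rho$ and $\rho^*$) is then purely formal.
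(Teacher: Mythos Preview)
Your approach is genuinely different from the paper's and, once cleaned up, works. The paper argues by cases on the $\mathcal J$-class of $\sigma$: if $\sigma\in WfW$ with $f<e$ or $f$ unrelated to $e$ both sides vanish; if $f=e$ only $K=I(\sigma)$ contributes; and for $f>e$ one works modulo $Q=\bigoplus_{g<e}A\eta_g$ to replace $\eta_e$ by $\sum_{K\in\F(e)}e_K$ and reduce to the previous case. Your route avoids this trichotomy by computing $\psi_e(\sigma\eta_e)$ in one stroke, which is cleaner.

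However, your first display is wrong as written: $e_Iw\eta_e$ is not $\sum_{K\subseteq I} w\,\eta_K$, because $e_I$ does not commute with $w$. The fix you grope for in the last paragraph is much simpler than conjugating $\eta_K$ through $w$. Use centrality of $\eta_e$ (Lemma~\ref{centralizer}(ii)) at the outset:
\[
    \sigma\eta_e=\eta_e\sigma=\Big(\sum_{K\in\F(e)}\eta_K\Big)e_Iw
    =\sum_{K\in\F(e)}(\eta_K e_I)w
    =\sum_{K\in\F(e),\,K\subseteq I}\eta_K\, w,
\]
where $\eta_Ke_I=e_I\eta_K$ holds because $FE$ is commutative, and then the relation from Lemma~\ref{lemma JK} applies. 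Next, for $K\in\F(e)$ one has $e_K\eta_e=\eta_K$ (only the term $L=K$ survives in $\sum_{L\in\F(e)}e_K\eta_L$, by Lemma~\ref{conjugate}(iii)), so
\[
    \eta_K w=(e_K\eta_e)w=\eta_e(e_Kw)=(e_Kw)\eta_e=(e_K\sigma)\eta_e,
\]
using centrality once more and $e_Ke_I=e_K$. This is exactly the identity $\sigma\eta_e=\sum_{K\subseteq I(\sigma)}(e_K\sigma)\eta_e$ you wanted. Applying $\psi_e$ and then $\rho$ entrywise gives \eqref{rhoStarProp}.

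Two further simplifications: first, $e_K\sigma=e_Kw$ shows directly that $I(e_K\sigma)=K$, so your worry about collisions is unfounded --- distinct $K$ land in distinct rows. Second, you do not need to argue separately that $e_K\sigma\in WeW$: it follows since $e_K\in Cl_W(e)$.
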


\begin{proof}

Suppose that $\rho$ is a representation of $W^*(e)$ with $e\ne 1$. If $\sigma\in WfW$ where $f<e$ or $f$ is not related to $e$, it follows from $(i)$ of Lemma \ref{basis} that $\sigma\eta_e = 0$. So
\begin{eqnarray}\label{fLesse}
\rho^*(\sigma) = 0      \mbox{\qquad   for } \sigma\in WfW \mbox{ where } f<e \mbox{ or } f \mbox{ is not related to } e.
\end{eqnarray}
On the other hand, there is no face $K$ with $K \in \F(e)$ and $K \subseteq I( \sigma)$. Thus the right hand side of (\ref{rhoStarProp}) is zero. This proves (\ref{rhoStarProp}) if $f<e$ or $f$ is not related to $e$.

If $\sigma\in WeW$, then $K=I(\sigma)$ is the only face of $P$ such that $K \in \F(e)$ and $K \subseteq I( \sigma)$. This tells us that $e_K \sigma = \sigma$ and so the right hand side of (\ref{rhoStarProp}) is $\rho(p(\sigma)) E_{IJ}$ with $I=I(\sigma)$ and $J=J(\sigma)$. On the other hand, $\psi_e(\sigma\eta_e) = p(\sigma)E_{IJ}$ by (\ref{psidefinition}). Therefore,
\begin{eqnarray}\label{sigmaInWeW}
\rho^*(\sigma) = \rho(p(\sigma))E_{IJ}     \quad\mbox{ if } \sigma\in WeW,
\end{eqnarray}
by (\ref {psidefinition}) and (\ref{rhostardefinition}). This proves (\ref{rhoStarProp}) if $\sigma\in WeW$.

Next, we suppose that $\sigma\in WfW$ where $f>e$. Denote by $Q = \bigcup_{g\in \Lambda, g < e} A\eta_g$. Let $\equiv$ be congruence mod $Q$. Then
\[
    \eta_e = \sum_{K \in \F(e)} \eta_K = \sum_{K \in \F(e)} \sum_{J\in \F(K)}(-1)^{\dim K - \dim J} e_J \equiv \sum_{K\in \F(e)} e_K.
\]
Thus
\[
    \sigma\eta_e = \eta_e \sigma\eta_e \equiv \sum_{K\in \F(e)} e_K\sigma \eta_e.
\]
By (\ref {psidefinition2}), (\ref{rhostardefinition}) and (\ref{fLesse}), we have
\[
    \rho^*(\sigma) = \sum_{K\in \F(e), K \subseteq I(\sigma)} \rho^*(e_K\sigma).
\]
Applying (\ref{sigmaInWeW}), we know that (\ref{rhoStarProp}) is true.             $\qquad\Box$
\end{proof}

\noindent {\bf Example 4.1} Let $e\in \Lambda$ be minimal and $\rho$ an irreducible representation of $W^*(e)$. Then the face $L=\varepsilon^{-1}(e)$ corresponding to $e$ is a vertex of $P$. So $\F(e) = \{wL\mid w\in W\}$ consists of some vertices of $P$. To determine $\rho^*$, use Proposition \ref{proposition 4.1}. The conditions $K\in \F(e)$ and $K \subseteq I( \sigma)$ on $K$ in $(4.1)$ are equivalent to that $K = \{ i \}$ and $i\in I(\sigma)$. Thus $\rho^*(\sigma) = \sum_{i \in I(\sigma)}\rho(p(e_{\{i\}}\sigma))E_{i, i\sigma}$.

\vspace{2mm}
\noindent {\bf Example 4.2} If $M$ is $\mathcal{J}$-irreducible and $e\in \Lambda$ is minimal, by $(iii)$ of Theorem \ref{recipe} we get $W^*(e) = 1$. Let $\rho$ be the only irreducible representation of $W^*(e)$ given by $\rho: W^*(e) \rightarrow F$ such that $\rho(W^*(e)) = 1 \in F$. Note that $p(e_K\sigma) \in W^*(e) = \{ 1 \}$; we have $\rho(p(e_K\sigma)) = 1$. It follows from Example 4.1 that
\begin{eqnarray} \label{matrixrep}
    \rho^*(\sigma) = \sum_{i \in I(\sigma)} E_{i, i\sigma}.
\end{eqnarray}
More generally, if $M$ is a reductive monoid with 0 and $e\in \Lambda$ minimal with $W^*(e) = 1$, then $\rho^*$ is given by (\ref{matrixrep}).

\vspace{2mm}
\noindent {\bf Example 4.3} Suppose that $e=1\in\Lambda$ and $\rho$ is a representation of $W$. Then $\F(e) = \{P\}$. If $\sigma\in W$, then the only face $K$ of $P$ with the conditions on $K$ in (\ref{rhoStarProp}) is $K=P$, in this case $\e_K \sigma = \sigma$. It follows from (\ref{rhoStarProp}) that $\rho^*(\sigma) = \rho(\sigma)$. If $\sigma\in R-W$, then $I(\sigma)$ is a proper subset of $V(P)$ and there is no face $K$ of $P$ such that $K \in \F(e)$ and $K \subseteq I( \sigma)$. So $\rho^*(\sigma) = 0$ by Proposition \ref{proposition 4.1} So $\rho^* = \rho \circ \pi$ where $\pi: FR \rightarrow FW$ is an algebraic homomorphism defined by $\pi(\sigma) = \sigma$ for $\sigma\in W$ and $\pi(\sigma) = 0$ for $\sigma\in R-W$.

\begin{theorem} \label{theorem 4.1}
Suppose that $e\in \Lambda$. Let $\chi$ be a character of $W^*{(e)}$. Let $\chi^*$ be the corresponding character of $R$. Then for $\sigma\in R$,
\begin{eqnarray}\label{chistar}
    \chi^*(\sigma) =  \sum_{ K \in \F(e), K \sigma = K }
    \chi ( \mu_K \sigma \mu^-_K ).
\end{eqnarray}
\end{theorem}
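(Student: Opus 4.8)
The plan is to obtain (\ref{chistar}) by taking the trace in the matrix formula of Proposition~\ref{proposition 4.1}. Fix a representation $\rho$ of $W^*(e)$ affording $\chi$, so that $\chi^*$ is the character of the representation $\rho^*$ of $R$. Proposition~\ref{proposition 4.1} displays $\rho^*(\sigma)$ as a block matrix whose blocks are indexed by the faces in $\F(e)$: each $K\in\F(e)$ with $K\subseteq I(\sigma)$ contributes the block $\rho(p(e_K\sigma))$ in position $\big(I(e_K\sigma),\,J(e_K\sigma)\big)$. Such a contribution affects the trace only when it is a diagonal block, i.e.\ when $I(e_K\sigma)=J(e_K\sigma)$, and then it adds $\operatorname{tr}\rho(p(e_K\sigma))=\chi(p(e_K\sigma))$. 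So the first step yields
$$
    \chi^*(\sigma)=\sum_{\substack{K\in\F(e),\ K\subseteq I(\sigma)\\ I(e_K\sigma)=J(e_K\sigma)}}\chi\big(p(e_K\sigma)\big).
$$

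The second step is to recognise this index set as $\{K\in\F(e)\mid K\sigma=K\}$. Writing $\sigma=e_{I(\sigma)}\,u\,e_{J(\sigma)}$ with $u\in W$, for a face $K\in\F(e)$ with $K\subseteq I(\sigma)$ the element $e_K\sigma=e_K\,u\,e_{J(\sigma)}$ is the restriction of $\sigma$ to $K$ and so carries $K$ bijectively onto the face $K\sigma$; hence $I(e_K\sigma)=K$ and $J(e_K\sigma)=K\sigma$, and the diagonal condition becomes $K\sigma=K$. Conversely, if $K\not\subseteq I(\sigma)$ then $e_K\sigma=e_{K\cap I(\sigma)}\,u\,e_{J(\sigma)}$ has domain a proper subface of $K$, so $|K\sigma|<|K|$ and $K\sigma\neq K$; thus the condition $K\subseteq I(\sigma)$ is automatic once $K\sigma=K$.

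The third step rewrites $p(e_K\sigma)$. For $K$ in the index set, $I(e_K\sigma)=J(e_K\sigma)=K$, so the definition of the projection $p$ gives $p(e_K\sigma)=\mu_K\,(e_K\sigma)\,\mu_K^-$; and since $\mu_K=e_L\,v\,e_K$ for a suitable $v\in W$ (with $L=\varepsilon^{-1}(e)$), one has $\mu_K e_K=\mu_K$, whence $p(e_K\sigma)=\mu_K\sigma\mu_K^-$. Plugging this into the display above gives (\ref{chistar}), valid for every $e\in\Lambda$; in the extreme case $e=1$, where $\F(e)=\{P\}$, the sum reduces to the single term $\chi(\sigma)$ when $\sigma\in W$ and to the empty sum $0$ otherwise, recovering Example~4.3.

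Since Proposition~\ref{proposition 4.1} already carries the substantive content, the only point that needs care is the combinatorial bookkeeping of domains and ranges in the second and third steps: pinning down $J(e_K\sigma)$ as the image face $K\sigma$, noting that $K\sigma=K$ forces $K\subseteq I(\sigma)$, and using the identity $\mu_K e_K=\mu_K$. I do not expect a genuine obstacle beyond this.
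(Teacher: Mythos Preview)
Your proof is correct and follows the same route as the paper's: take the trace in Proposition~\ref{proposition 4.1}, identify the index set $\{K\in\F(e)\mid K\subseteq I(\sigma),\ I(e_K\sigma)=J(e_K\sigma)\}$ with $\{K\in\F(e)\mid K\sigma=K\}$, and use $\mu_K e_K=\mu_K$ to rewrite $p(e_K\sigma)$ as $\mu_K\sigma\mu_K^-$. Your write-up supplies more detail for the domain/range bookkeeping than the paper does, but the argument is the same.
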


\begin{proof}
By Proposition \ref{proposition 4.1} it is easy to see that
\[
    \chi^*(\sigma) = \sum_{ K \in \F(e), K \subseteq I(\sigma) \atop {I(e_K \sigma) = J(e_K \sigma) }} \chi(p(e_K \sigma)).
\]
Notice that, for $K\in \F(P)$, the conditions $K \subseteq I(\sigma)$ and ${I(e_K \sigma) = J(e_K \sigma) }$ are the same as $K\sigma = K$. By the definition of $\mu$ we have $\mu_K e_K = \mu_K$ and $p(e_K \sigma) = \mu_K e_K \sigma \mu^-_K = \mu_K \sigma \mu^-_K$. Therefore, (\ref{chistar}) is true.    $\qquad\Box$
\end{proof}

\noindent {\bf Example 4.4} Let $\mu$ be the first fundamental dominant weight of type $C_3$ and $M=\overline{F^*Sp_6}$, the Zariski closure of $F^*Sp_6$ in $M_6(F)$. Let $\{\varepsilon_1, \varepsilon_2, \varepsilon_3\}$ be a standard orthogonal basis of a 3-dimensional Euclidean space. Then the root system of $C_3$ is
$$
\Phi=\{\pm \varepsilon_i\pm \varepsilon_j,~1\leq i<j\leq 3;~\pm 2\varepsilon_i,~1\leq i\leq 3\}
$$
Then $\mu=\varepsilon_1$ and
$$
    W(\mu) = \{~\pm \varepsilon_1,~ \pm \varepsilon_2, ~\pm \varepsilon_3\}.
$$
The polytope $P$ is the {\it octahedron}, a three-dimensional crosspolytope labeled as follows. It is simplicial.

\begin{center}
\setlength{\unitlength}{0.8mm}
\begin{picture}(60,70)(0,-30)
\put(0,0){\circle*{2}}
\put(20,10){\circle*{2}}
\put(40,0){\circle*{2}}
\put(60,10){\circle*{2}}
\put(30,35){\circle*{2}}
\put(25,-25){\circle*{2}}
\qbezier(0,0)(15,17.5)(30,35)
\qbezier[30](20,10)(25,22.5)(30,35)
\qbezier(60,10)(45.5,22.5)(30,35)
\qbezier(40,0)(35.5,17.5)(30,35)
\qbezier[30](20,10)(25,22.5)(30,35)
\qbezier[50](20,10)(40,10)(60,10)
\qbezier(40,0)(50,5)(60,10)
\qbezier(0,0)(20,0)(40,0)
\qbezier[30](0,0)(10,5)(20,10)
\qbezier(25,-25)(12.5,-12.5)(0,0)
\qbezier[50](25,-25)(22.5,-7.5)(20,10)
\qbezier(25,-25)(32.5,-12.5)(40,0)
\qbezier(25,-25)(42.5,-7.5)(60,10)

\put (-2,  2) {$5$}
\put(17,12){$4$}
\put(41,2){$3$}
\put(60,12){$2$}
\put(32,35){$1$}
\put(28,-28){$6$}

\end{picture}
\end{center}
It follows that
\begin{eqnarray*}
E(\overline T)  &\simeq& \F(P)\\
                &=&\big\{ \emptyset, \{1\}, \{2\}, \{3\}, \{4\}, \{5\}, \{6\}, \{1, 2\}, \{1, 3\}, \{1, 4\}, \{1, 5\}, \{6, 2\}, \{6, 3\},\\
                &&~\{6, 4\}, \{6, 5\}, \{2, 3\}, \{2, 4\}, \{3, 5\}, \{4, 5\}, \{1, 2, 3\}, \{1, 2, 4\}, \{1, 3, 5\},\\
                &&~\{1, 4, 5\}, \{6, 2, 3\}, \{6, 2, 4\}, \{6, 3, 5\}, \{6, 4, 5\}, \{1, 2, 3, 4, 5, 6\} \big\}.
\end{eqnarray*}
Let $e\in E(\overline T)$ correspond to the $2$-dimensional face $\{1, 2, 3\}$. Then $\F(e)$ consists of all $2$-dimensional faces of $P$, i.e.,
\[
    \F(e) = \big\{\{1, 2, 3\}, \{1, 2, 4\}, \{1, 3, 5\}, \{1, 4, 5\}, \{6, 2, 3\}, \{6, 2, 4\}, \{6, 3, 5\}, \{6, 4, 5\},  \big\}
\]
Suppose that $I(\sigma) = \{1, 2, 3, 4, 5, 6\}$ and that $\sigma: 1\mapsto 2\mapsto 3\mapsto 1$ and $4\mapsto 6\mapsto 5\mapsto 4$. Then $K = \{1, 2, 3\} $ and $K = \{4, 5, 6\}$ are the only two faces in $\F(e)$ such that $K\sigma = K$. For $K = \{1, 2, 3\}$, choose $\mu_K$ to be the identity map on K. Thus $\mu_K \sigma \mu^-_K = (123)$ in the usual cycle notation for permutations. For $K = \{4, 5, 6\}$, choose $\mu_K: \{1, 2, 3\} \rightarrow \{4, 5, 6\}$ with
$\mu_K: 1\mapsto 4, 2\mapsto 5, 3\mapsto 6$. The domain of $\mu_K \sigma \mu^-_K$ is $\{1, 2, 3\}$ and $\mu_K \sigma \mu^-_K = (123)$. It follows from Theorem \ref{theorem 4.1} that $\chi^*(\sigma) = 2\chi((123))$.

\vspace{2mm}
\noindent{\bf Acknowledgment} We would like to thank Mohan Putcha and Lex Renner for their help when we were stuck. We are grateful to Benjamin Steinberg for bringing our attention to his papers \cite{S1, S2}. We thank Mahir Can for email communications that are useful in the development of this paper. We would also like to thank Reginald Koo for his help with drawing the polytope in Example 4.4 using Latex.

\pagebreak
\vspace{1cm}
\noindent Zhuo Li \\
Department of Mathematics \\
Xiangtan University\\
Xiangtan, Hunan 411105, P. R. China\\
\noindent Email: zli@xtu.edu.cn\\

\noindent Zhenheng Li\\
Department of Mathematical Sciences \\
University of South Carolina Aiken\\
Aiken, SC 29801, USA\\
\noindent Email: zhenhengl@usca.edu\\

\noindent You'an Cao \\
Department of Mathematics \\
Xiangtan University\\
Xiangtan, Hunan 411105, P. R. China\\
\noindent Email: cya@xtu.edu.cn\\

\end{document}